\documentclass[a4paper,10pt]{article}
\usepackage[margin=1in]{geometry}
\usepackage[utf8]{inputenc}
\usepackage[T1]{fontenc}
\usepackage{lmodern}
\usepackage{microtype}
\usepackage{authblk}
\usepackage{amsmath,amsthm,amssymb,amsfonts}
\usepackage{algorithm,algorithmic}
\usepackage{graphicx}
\usepackage{booktabs,multirow,makecell}
\usepackage{pgfplots}
\pgfplotsset{compat=1.18}
\usepgfplotslibrary{groupplots,fillbetween}
\usepackage{placeins}
\newcommand{\arxivexpandedexperiments}{}
\usepackage{xcolor}
\usepackage{natbib}
\usepackage{hyperref}
\hypersetup{colorlinks=true,linkcolor=blue,filecolor=magenta,urlcolor=cyan}
\usepackage{cleveref}

\newcommand{\R}{\mathbb R}
\newcommand{\E}{\mathbb E}
\newcommand{\B}{\mathcal B}
\newcommand{\C}{\mathcal C}
\newcommand{\Pset}{\mathcal P}
\newcommand{\norm}[1]{\left\|#1\right\|}
\newcommand{\ip}[2]{\left\langle#1,#2\right\rangle}
\newcommand{\dist}{\operatorname{dist}}
\newcommand{\argmin}{\operatorname*{arg\,min}}
\newcommand{\ind}{\delta}

\theoremstyle{plain}
\newtheorem{theorem}{Theorem}
\newtheorem{lemma}{Lemma}

\newtheorem{assumption}{Assumption}
\newtheorem{remark}{Remark}

\title{\bf  Constrained Nonconvex Stochastic Optimization with One Projection}
\date{}

\author[1]{Yuyang Deng}
\author[2]{Mohammadreza M. Kalan}
\author[1]{Eitan J. Neugut}
\author[3]{Mehrdad Mahdavi}
\affil[1]{Columbia University, Department of Statistics}
\affil[2]{Univ Rennes, Ensai, CNRS, CREST--UMR 9194, F-35000 Rennes, France}
\affil[3]{Pennsylvania State University, Department of Computer Science and Engineering}
\affil[ ]{\texttt{yd2824@columbia.edu}\quad
  \texttt{mohammadreza.kalan@ensai.fr}\quad
  \texttt{eitan.neugut@columbia.edu}\quad\texttt{mzm616@psu.edu}}
\begin{document}

\maketitle

\begin{abstract}
Constrained nonconvex optimization has seen increasing application in modern machine learning, such as safe LLM alignment/finetuning, transfer learning and rank-constrained continual learning. The commonly used approach is Projected SGD which requires projection at every iteration. However, projection onto a functional constraint can cost substantially more than a
stochastic first-order update. We study whether this operation can be deferred
until the end of nonconvex stochastic optimization, and only do it once. For weakly convex,
possibly nonsmooth objectives with regular convex constraints, we give a
penalized proximal method that uses $\widetilde O(\epsilon^{-4})$ stochastic
subgradients and one projection onto the constraint set. The returned point
is exactly feasible and has a small Moreau-envelope stationarity measure;
for smooth objectives, the same algorithm controls the projected-gradient
mapping. For smooth nonconvex constraints, we assume a global lower bound on the
infeasible constraint slope which permits arbitrary, possibly infeasible
initialization. An exact-penalty variant attains the same stochastic-oracle
order and returns an exactly feasible point within $O(\epsilon)$ of an
$\epsilon$-KKT point. All intermediate updates use only projections onto a
simple Euclidean ball. These are oracle-complexity guarantees: the cost of
the single terminal projection is separate, and no efficient projection
algorithm for a general nonconvex set is assumed to follow from regularity.
\end{abstract}

\section{Introduction}
We consider stochastic optimization with a functional constraint,
\begin{equation}
    \min_{x\in\R^d} f(x)
    \qquad\text{subject to}\qquad g(x)\leq0,
    \qquad \C:=\{x:g(x)\leq0\}.
    \label{eq:problem}
\end{equation}
The objective $f$ may be nonconvex and nonsmooth, and the constraint function
$g$ may be convex or nonconvex. In modern machine learning, this problem formulation is widely used in many application scenarios, such as transfer learning~\citep{NEURIPS2025_85d4d3ba,kalan2026neymanpearson}, constrained LLM alignment~\citep{zhang2025alignment,peng2025enhancing} and rank-constrained LLM finetuning~\citep{hu2022lora,zhang2023adalora,jang2024lora,park2025riemannian}. Nowadays, as the data amount and the number of parameters to optimize increase dramatically, a computationally efficient algorithm is needed to deploy large-scale model training.

Standard constrained stochastic optimization relies on projecting onto the feasible set after each update step. This method can work well for the simple constraint set such as $l_2$ unit ball or simplex, since the projection onto them can be computed efficiently.  However, this approach becomes a severe bottleneck under general functional constraints as projection is itself an expensive optimization problem and repeatedly solving that sub-problem can dominate the total computational cost.  This motivates a fundamental question:  

\emph{Can we retain a nonconvex stationarity rate while projecting onto the
constraint set only once, at the end?}

For convex objectives and constraints, one-projection methods already show
that the strict feasibility need not be maintained during optimization
\citep{mahdavi2012one,yang2017richer}. Extending this principle to nonconvex
objectives requires a different conclusion and a different argument. A small
objective gap is no longer the appropriate target, averaging outer iterates
need not preserve stationarity, and projecting an arbitrary nearly stationary
point of a penalty objective need not preserve the desired certificate.
With nonconvex constraints, there is a further obstruction: the
constraint violation can have stationary points outside the feasible set.

To bridge this gap, we propose a framework that separates stochastic
progress from the final feasibility correction. Our algorithm approximately
solves a sequence of strongly convex penalized proximal subproblems over a
simple bounding ball, allowing intermediate iterates to remain infeasible.
After selecting one outer iterate at random, we apply a single exact
projection onto $\C$. The analysis identifies conditions under which this
final correction yields an exactly feasible output with a stationarity
guarantee for the original constrained problem.

\paragraph{Our results.}
We introduce the Penalized Proximal Subgradient Method with One Projection
and analyze it for weakly convex, possibly nonsmooth objectives under two
constraint settings. Under the assumptions specified for each setting, our
stationarity guarantees hold with probability at least $2/3$ and require
$\widetilde O(\epsilon^{-4})$ stochastic subgradient calls, for fixed
problem parameters.
\begin{itemize}
\setlength{\itemsep}{2pt}
    \item \emph{Convex constraints.} Starting from a supplied feasible
    point, the algorithm approximately solves softplus-penalized proximal
    subproblems and returns an exactly feasible output. For smooth
    objectives, Theorem~\ref{thm:convex-smooth} guarantees an
    $\epsilon$-stationary point measured by the projected-gradient mapping.
    For nonsmooth objectives, Theorem~\ref{thm:convex-nonsmooth} bounds the
    Moreau-envelope residual by $\epsilon$. In both cases, the oracle
    complexity matches, up to logarithmic factors, the standard
    $\epsilon^{-4}$ stochastic-oracle complexity for weakly convex
    optimization~\citep{davis2019guided,davis2019model}.

    \item \emph{Nonconvex constraints.} For smooth nonconvex $g$, we use
    an exact hinge penalty and impose a global infeasible-slope condition
    to rule out stationary points of the constraint violation outside the
    feasible set. This condition is inspired by boundary gradient
    lower bounds in convex constrained
    optimization~\citep{mahdavi2012one,yang2017richer}, but is stronger:
    it controls the slope throughout the ball's infeasible region and
    accounts for directions blocked by the ball's boundary. Under this condition and the
    remaining regularity assumptions, the algorithm allows any initial
    point in the ball and enforces no intermediate feasibility.
    Theorem~\ref{thm:nonconvex} shows that one final projection returns an
    exactly feasible output within $O(\epsilon)$ of an $\epsilon$-KKT
    point of \eqref{eq:problem}, with the same
    $\widetilde O(\epsilon^{-4})$ stochastic subgradient complexity.
\end{itemize}

\section{Related Work}
\paragraph{Reducing projections.}
\citet{mahdavi2012one} introduced stochastic methods with a single final
projection for convex optimization. \citet{yang2017richer} developed a
broader penalty-based theory of reduced projections and improved rates under
constraint regularity. Our convex-constraint analysis uses this penalty
principle inside proximal subproblems, but the outer target is stationarity
of a nonconvex objective. Conditional-gradient methods provide a different
way to avoid projections by using a linear minimization oracle
\citep{jaggi2013frankwolfe}. That oracle can itself be costly for a functional
constraint; we instead use values and gradients of $g$ during the iterations.

\paragraph{Weak convexity and constrained stationarity.}
Proximally guided stochastic subgradient methods solve weakly convex
problems through strongly convex subproblems \citep{davis2019guided}.
Moreau-envelope stationarity and the $\epsilon^{-4}$ stochastic-oracle order
are established tools in this setting \citep{davis2019model}.
Our outer--inner organization builds on these ideas. For nonconvex feasible
sets, \citet{davis2025sets} analyze stochastic model-based methods over
proximally smooth sets, including set approximations followed by retractions
that restore feasibility each iteration. Their geometric framework is
broader than our single smooth inequality. Our nonconvex result instead
permits infeasible iterates throughout, at the price of a global slope
condition and one terminal projection oracle.

\paragraph{Functional constraints and exact penalties.}
\citet{ma2020quadratic} and \citet{boob2019functional} regularize both
objectives and constraints and solve convex constrained subproblems.
The IQRC method of \citet{ma2020quadratic} assumes an approximately feasible
initial point and allows approximate feasibility; it is not a method defined
by one final exact projection. Its fully stochastic constraint-oracle model
also differs from our exact access to $g$ and $\nabla g$.
\citet{jia2022firstorder} study deterministic weakly convex functional
constraints, including feasible approximate stationary solutions and
Fritz--John guarantees without constraint qualification.  More directly, \citet{liu2025spider} connect
exact-penalty stationarity to constrained KKT conditions under a uniform
Slater-type constraint qualification and give a stochastic method for expectation
constraints. We do not claim a new exact-penalty principle. We use a related
slope argument, accounting also for the bounding set's normal cone, to
justify a single terminal feasibility correction and its effect on the
reported output.

\section{Main Results}
\label{sec:results}
\noindent\textbf{Notations.}~All norms are Euclidean. Write $[u]_+:=\max\{u,0\}$ and let
$\Pset_D(x):=\argmin_{y\in D}\norm{y-x}$ denote the Euclidean projection,
possibly set-valued. The symbols $\partial f$ and $N_D=\partial\ind_D$
denote the limiting subdifferential and normal cone; $\ind_D$ is the extended
real-valued indicator of $D$. A function is $\ell$-weakly convex if adding
$\ell\norm{\cdot}^2/2$ makes it convex. In particular, an $\ell$-smooth
function is $\ell$-weakly convex.

\subsection{Penalized Proximal Subgradient Method with One Projection}
\label{sec:algorithm}
Our method reduces the original nonconvex problem to a sequence of
\emph{strongly convex subproblems over a simple ball}. At outer iteration
$t$, we form the penalized proximal objective
\begin{equation}
    H_t(z):=f(z)+p(g(z))+\frac{\beta}{2}\norm{z-x_t}^2,
    \qquad \min_{z\in\B} H_t(z),
    \label{eq:generic-subproblem}
\end{equation}
where $\B=\{z:\norm{z}\leq1\}$ and $p$ penalizes constraint violation.
The quadratic term is chosen to dominate the negative curvature of both
$f$ and $p\circ g$, so that the \emph{entire objective $H_t$} is
$m$-strongly convex on $\B$, with $m>0$.
For convex $g$, the softplus penalty is convex, and $\beta=2\ell$ gives
$m=\ell$. For nonconvex $g$, we use the hinge penalty
$p(u)=\lambda[u]_+$ and choose $\beta>\ell+\lambda L_g$, giving
$m=\beta-\ell-\lambda L_g$. The following subsections specify the penalty
parameters and assumptions for each setting.

Each subproblem is solved approximately by stochastic subgradient steps
projected onto $\B$. Their average becomes the next outer iterate
$x_{t+1}$. Strong convexity lets us choose a fixed inner budget $K$ to
obtain a prescribed function error
\begin{equation}
    H_t(x_{t+1})-\min_{z\in\B}H_t(z)\leq\epsilon_{\rm in}
    \label{eq:generic-inner-gap}
\end{equation}
with the probability required by the analysis. The algorithm does not
need to evaluate the minimum or test this gap during its execution.

\begin{algorithm}[t!]
\caption{Penalized Proximal Subgradient Method with One Projection}
\label{alg:one-projection}
\begin{algorithmic}[1]
\REQUIRE $x_0\in\B$, $T,K\geq2$, penalty $p$,
index set $\mathcal I$, and $\beta,m>0$ such that $H_t$ is $m$-strongly convex on $\B$.
\FOR{$t=0,\ldots,T-1$}
    \STATE Form $H_t(z)=f(z)+p(g(z))+\frac{\beta}{2}\norm{z-x_t}^2$.
    \STATE $x_{t+1}\gets\operatorname{PSM}(H_t,x_t)$ \hfill \textcolor{gray}{\# approximately minimize $H_t$ over $\B$}.
\ENDFOR
\STATE Sample $J$ uniformly from the index set $\mathcal I$.
\STATE \textbf{Return} $\hat x\in\Pset_{\C}(x_J)$.
\STATE \textbf{Subroutine} $\operatorname{PSM}(H,x)$: 
\STATE \hspace{1em} Initialize $z_1\gets x$.
\FOR{$s=1,\ldots,K$}
    \STATE Set $\eta_s\gets 2/[m(s+1)]$ and query $\widetilde v(z_s;\xi_s)$ with fresh $\xi_s$.
    \STATE Choose $h_s\in\widetilde v(z_s;\xi_s)+\beta(z_s-x)+\partial(p\circ g)(z_s)$.
    \STATE $z_{s+1}\gets\Pset_{\B}(z_s-\eta_s h_s)$.
\ENDFOR
\STATE \hspace{1em} \textbf{Return} $K^{-1}\sum_{s=1}^K z_s$.
\end{algorithmic}
\end{algorithm}

Both loops operate in $\B$ without enforcing feasibility in $\C$.
After the outer loop, we select an iterate at random and project it onto
$\C$ once. For convex constraints, the analysis bounds the infeasibility
of each outer iterate. For nonconvex constraints, it instead shows that
a selected iterate with a small proximal residual has a nearby feasible,
approximately stationary witness. Thus intermediate iterates may be infeasible.
Algorithm~\ref{alg:one-projection} describes the pseudocode.

For softplus, the penalty subgradient is simply
$\nabla(p_\gamma\circ g)(z_s)$; for hinge, use the selection in
Section~\ref{sec:nonconvex}. Together with \eqref{eq:oracle}, this makes
$h_s$ an unbiased stochastic subgradient of $H$, even when $f$ is nonsmooth.
The inner steps project only onto $\B$; the sole projection onto $\C$
appears in the outer algorithm's final return step.

\subsection{Nonconvex Objective with a Convex Constraint Set}
\label{sec:convex}

\begin{assumption}[Shared assumptions for convex constraints]
\label{ass:convex}
\leavevmode
\begin{itemize}
\setlength{\itemsep}{2pt}
    \item The safeguard set is $\B=\{x:\norm{x}\leq1\}$. The feasible set
    $\C$ is nonempty, closed, and contained in $\B$. A point $x_0\in\C$
    is supplied.
    \item The function $f$ is $\ell$-weakly convex and $G_f$-Lipschitz on
    an open convex neighborhood of $\B$, where $\ell,G_f>0$.
    With fresh randomness at each call, the oracle satisfies
    \begin{equation}
        \E[\widetilde v(x;\xi)\mid\mathcal F]
        =v(x)\in\partial f(x),
        \qquad \norm{\widetilde v(x;\xi)}\leq G_f\quad\text{a.s.}
        \label{eq:oracle}
    \end{equation}
    Here $x$ is measurable with respect to the past history $\mathcal F$.
    \item The function $g$ is convex and continuously differentiable on
    that neighborhood. For constants $G_g,\sigma>0$,
    \[
        \norm{\nabla g(x)}\leq G_g\quad(x\in\B),
        \qquad
        \norm{\nabla g(z)}\geq\sigma\quad(z\in\B,\ g(z)=0).
    \]
    Both $g(x)$ and $\nabla g(x)$ can be evaluated exactly.
\end{itemize}
\end{assumption}

The unit radius is a normalization; a known bounding ball can be translated
and rescaled. The boundary condition implies a global error bound on $\B$,
$\dist(x,\C)\leq[g(x)]_+/\sigma$, proved in
Lemma~\ref{lem:convex-eb}. It is a property of the chosen defining function,
not merely of the feasible set.

\paragraph{Algorithm.}
At outer iteration $t$, define
\begin{equation}
    F_t(x):=f(x)+\ell\norm{x-x_t}^2,
    \qquad
    p_\gamma(u):=\gamma\log\bigl(1+\exp(\lambda_0u/\gamma)\bigr).
    \label{eq:convex-penalty}
\end{equation}
With $p=p_\gamma$ and $\beta=2\ell$, the subproblem in
\eqref{eq:generic-subproblem} is $H_t=F_t+p_\gamma\circ g$. Since
$F_t$ is $\ell$-strongly convex and $p_\gamma\circ g$ is convex,
$H_t$ is $\ell$-strongly convex on $\B$. Set
\begin{equation}
    \lambda_0:=\frac{2(G_f+4\ell)}{\sigma},\qquad
    \gamma:=\frac1K.
    \label{eq:convex-settings}
\end{equation}
Use Algorithm~\ref{alg:one-projection} with
$p=p_\gamma$, $\beta=2\ell$, $m=\ell$, and
$\mathcal I=\{1,\ldots,T\}$.

Set $\Delta_f:=f(x_0)-\min_{x\in\B}f(x)$ throughout this subsection.
We state the two objective regimes separately because their stationarity
certificates have different meanings.

\paragraph{Smooth objectives.}
\begin{assumption}[Smooth objective]
\label{ass:convex-smooth}
\leavevmode
  The function $f$ is differentiable with an $\ell$-Lipschitz
    gradient on the same open convex neighborhood of $\B$.
 
\end{assumption}

We use the projected-gradient mapping norm
\begin{equation}
    \mathcal G_\ell(x)
    :=\ell\norm{x-\Pset_{\C}(x-\nabla f(x)/\ell)}.
    \label{eq:gradient-mapping}
\end{equation}
At a feasible point this measures the displacement of a projected gradient
step. It vanishes exactly when
$0\in\nabla f(x)+N_{\C}(x)$, the constrained first-order condition.
A small value therefore bounds the change made by a projected gradient step
from the returned point itself.

\begin{theorem}[\textbf{Smooth objective with convex constraints}]
\label{thm:convex-smooth}
Suppose Assumptions~\ref{ass:convex} and \ref{ass:convex-smooth} hold. For any
$0<\epsilon\leq1$, run Algorithm~\ref{alg:one-projection} with
$p=p_\gamma$, $\beta=2\ell$, $m=\ell$, the penalty parameters in
\eqref{eq:convex-settings}, and $\mathcal I=\{1,\ldots,T\}$.
There are choices
\begin{equation}
    T=O\!\left(1+\frac{\ell\Delta_f}{\epsilon^2}\right),
    \qquad K=\widetilde O(\epsilon^{-2}),
    \label{eq:convex-smooth-rate}
\end{equation}
specified in Appendix~\ref{app:convex-schedule}, for which the output
satisfies
\[
    \hat x\in\C,
    \qquad
    \Pr\{\mathcal G_\ell(\hat x)\leq\epsilon\}\geq\frac23.
\]
For fixed problem parameters, the algorithm uses
$TK=\widetilde O(\epsilon^{-4})$ stochastic gradient calls and exactly
one projection onto $\C$.
\end{theorem}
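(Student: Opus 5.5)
We carry out the argument in four stages: descent, infeasibility control, stationarity of the projected point, and probability.

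\textbf{Step 1: inner solver accuracy.} Each $H_t=F_t+p_\gamma\circ g$ is $\ell$-strongly convex on $\B$. Its stochastic subgradients are bounded by $G_H:=G_f+\lambda_0G_g+4\ell$, because $|p_\gamma'|\le\lambda_0$ and $\norm{z-x_t}\le2$. The standard analysis of projected SGD with steps $\eta_s=2/[\ell(s+1)]$ applies, and averaging then gives
\[
\E[H_t(x_{t+1})-\min_\B H_t]\lesssim G_H^2\log K/(\ell K).
\]
A martingale (Azuma/Freedman) concentration bound upgrades this to the high-probability gap \eqref{eq:generic-inner-gap}, with $\epsilon_{\rm in}=\widetilde O(1/K)$, simultaneously for all $t$ after a union bound over $T$. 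Take $K=\widetilde O(\epsilon^{-2})$, so that $\epsilon_{\rm in}\le c\epsilon^2/\ell$.

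\textbf{Step 2: descent and a small proximal step.} Let $\Phi(x):=f(x)+p_\gamma(g(x))$. Then $H_t(x_{t+1})\le H_t(x_t)+\epsilon_{\rm in}$ gives
\[
\Phi(x_{t+1})+\ell\norm{x_{t+1}-x_t}^2\le\Phi(x_t)+\epsilon_{\rm in}.
\]
Telescoping, and using $0\le p_\gamma\le\gamma\log2+\lambda_0[u]_+$ with $g(x_0)\le0$, we get
\[
\tfrac1T\sum_t\ell\norm{x_{t+1}-x_t}^2\le(\Delta_f+\gamma\log2)/T+\epsilon_{\rm in}.
\]
By strong convexity, the exact prox point $\bar x_{t+1}:=\argmin_\B H_t$ satisfies $\norm{x_{t+1}-\bar x_{t+1}}^2\le2\epsilon_{\rm in}/\ell$. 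Hence, for $J$ uniform, Markov's inequality gives $\ell\norm{\bar x_{J}-x_{J-1}}\lesssim\epsilon$ with probability at least $5/6$ once $T\gtrsim\ell\Delta_f/\epsilon^2$.

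\textbf{Step 3: infeasibility of each iterate (main obstacle).} We must show that $[g(x_{t+1})]_+=O(\epsilon/\lambda_0)$, or better. The plan is an exact-penalty argument at $\bar x_{t+1}$, using the choice $\lambda_0=2(G_f+4\ell)/\sigma$.

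Suppose $g(\bar x)>\delta$. Optimality gives $0\in\nabla F_t(\bar x)+p'_\gamma(g(\bar x))\nabla g(\bar x)+N_\B(\bar x)$. Using convexity of $g$ and the error bound of Lemma~\ref{lem:convex-eb}, I would compare $H_t(\bar x)$ with $H_t(\Pset_\C\bar x)$. That comparison yields
\[
H_t(\Pset_\C\bar x)\le H_t(\bar x)+(G_f+4\ell)\,\dist(\bar x,\C)+\gamma\log2-p_\gamma(g(\bar x)).
\]
Since $p_\gamma(u)\ge\lambda_0u$ and $\dist\le g/\sigma$, the net change is at most $-\tfrac{\lambda_0}{2}g(\bar x)+\gamma\log2$. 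This forces $g(\bar x)\le 2\gamma\log2/\lambda_0=O(1/K)$.

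For $x_{t+1}$ itself the same comparison, now with the $\epsilon_{\rm in}$ slack, gives $[g(x_{t+1})]_+\lesssim(\epsilon_{\rm in}+\gamma)/\lambda_0$. Therefore $\dist(x_J,\C)\lesssim\epsilon^2$ via Lemma~\ref{lem:convex-eb}.

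\textbf{Step 4: stationarity of $\hat x$.} Write $y:=\bar x_J$ (more precisely, the prox point of the next subproblem $H_J$ at $x_J$; using index set $\{1,\dots,T\}$ with a shift is harmless). Its optimality condition gives
\[
\nabla f(y)+2\ell(y-x_J)+\mu\nabla g(y)+n=0,\qquad n\in N_\B(y),\ \mu=p'_\gamma(g(y)).
\]
We also have $\norm{y-x_J}\lesssim\epsilon/\ell$. The quantity $\mu\nabla g(y)+n$ approximates an element of $N_\C$ at a nearby feasible point, up to error controlled by the small constraint value and $\gamma$: when $g(y)<0$ is bounded away from zero, $\mu\le\lambda_0e^{\lambda_0g/\gamma}$ is tiny, and otherwise $y$ is within $O(\gamma)$ of the boundary.

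The gradient mapping $\mathcal G_\ell$ is Lipschitz ($\le3\ell$-Lipschitz in $x$, by nonexpansiveness of $\Pset_\C$ and $\ell$-smoothness). Hence
\[
\mathcal G_\ell(\hat x)\le\mathcal G_\ell(y)+3\ell\norm{\hat x-y}.
\]
Here $\norm{\hat x-y}\le\norm{\hat x-x_J}+\norm{x_J-y}\lesssim\epsilon/\ell$. Bounding $\mathcal G_\ell(y)$ is done through the approximate KKT relation and the $(\sigma)$-regularity of $\C$ near $y$.

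Constants are then tuned so that the sum is $\le\epsilon$ on the intersection of the events from Steps 1 and 2, which has probability at least $2/3$. Only the final $\Pset_\C(x_J)$ touches $\C$, giving exactly one projection and $TK=\widetilde O(\epsilon^{-4})$ oracle calls.
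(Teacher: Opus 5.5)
\textbf{Verdict: there is a genuine gap, in Step 4.}

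Steps 1--3 are essentially sound. The descent in Step 2 on $f+p_\gamma\circ g$ is a valid and slightly simpler Lyapunov argument than the paper's comparison with $\Pset_\C(x_t)$. Step 3 is the paper's exact-penalty computation, so it is not the real obstacle.

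Step 4 is where the theorem is actually proved, and you only assert it (``bounding $\mathcal G_\ell(y)$ is done through the approximate KKT relation and the $\sigma$-regularity of $\C$ near $y$''). This is not routine for the point you chose, for three reasons:
\begin{itemize}
\item the softplus-penalized prox point $y$ over $\B$ may be infeasible;
\item it may sit on $\partial\B$ with $n\neq0$, since Assumption~\ref{ass:convex} lets $\C$ touch $\partial\B$;
\item $\mu\nabla g(y)$ is in general not an element of $N_\C(y)$.
\end{itemize}
Your suggested mechanism is to pass to a nearby boundary point where $N_\C$ is the ray through $\nabla g$. That needs a modulus of continuity for $\nabla g$, which the convex-constraint assumptions do not give ($g$ is only $C^1$). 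It also says nothing about $n$.

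There is also an index slip. Step 2 controls $\norm{\bar x_J-x_{J-1}}$, not $\norm{y-x_J}$ for the prox point of $H_J$; when $J=T$ that subproblem is never solved. Either keep $y=\bar x_J$ with center $x_{J-1}$, or invoke Lipschitz continuity of the penalized prox map.

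Finally, in Step 1 a plain Azuma bound with $\norm{z_s-u}\le2$ only gives inner error $O(\sqrt{\log(1/\delta)/K})$, which would inflate the cost to $\epsilon^{-6}$. The $\widetilde O(1/K)$ rate requires absorbing the martingale variance proxy $\sum_s\norm{z_s-u}^2$ into the $-\frac m4\sum_s\norm{z_s-u}^2$ term from strong convexity, as in Lemma~\ref{lem:inner}.

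\textbf{The missing idea.} Compare $x_{t+1}$ not with the penalized prox point but with the \emph{constrained} prox point $y_t=Q_\ell(x_t)=\argmin_{\C}F_t$.
\begin{itemize}
\item Your Step 3 computation, run for $P_t:=F_t+\lambda_0[g]_+$, gives $P_t(z)\ge F_t(y_t)+G_F\dist(z,\C)$ for $z\in\B$. So $y_t$ minimizes the $\ell$-strongly convex $P_t$ over $\B$.
\item Since $0\le p_\gamma(u)-\lambda_0[u]_+\le\gamma\log2$, quadratic growth gives $\norm{x_{t+1}-y_t}^2\le2(\epsilon_{\rm in}+\gamma\log2)/\ell$.
\item Because $y_t$ is feasible and exactly optimal, $y_t=\Pset_\C\bigl(y_t-(\nabla f(y_t)+2\ell(y_t-x_t))/\ell\bigr)$. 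Nonexpansiveness then gives $\mathcal G_\ell(y_t)\le2\ell\norm{y_t-x_t}$, with no multiplier or ball normal appearing.
\item The $3\ell$-Lipschitz continuity of $\mathcal G_\ell$ transfers this to $\hat x=\Pset_\C(x_{t+1})$. The remaining term $\norm{y_t-x_t}\le\norm{x_{t+1}-x_t}+\norm{x_{t+1}-y_t}$ is controlled by your Step 2.
\end{itemize}

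If you prefer to keep your $y$, convexity of $g$ (not $\sigma$-regularity) closes the feasible case. For $y\in\C$ and all $c\in\C$, $\ip{\mu\nabla g(y)+n}{c-y}\le-\mu g(y)\le\gamma/e$. Testing the projection inequality for $\Pset_\C\bigl(y+(\mu\nabla g(y)+n)/\ell\bigr)$ at $c=y$ then gives $\mathcal G_\ell(y)\le2\ell\norm{y-x}+\sqrt{\ell\gamma/e}$, where $x$ is the subproblem's center. The $O(\gamma)$-infeasible case needs a separate estimate of the same $O(\sqrt\gamma)$ order.
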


\begin{remark}
The $\widetilde O(\epsilon^{-4})$ rate matches the optimal
unconstrained smooth nonconvex exponent under the oracle models of
\citet{arjevani2023lower,jin2026bounded}, while using one final projection.
This is a comparison of complexity exponents; a matching lower bound under
all assumptions and oracle access of Theorem~\ref{thm:convex-smooth}
is not established here.
\end{remark}

\paragraph{Nonsmooth objectives.}
Under Assumption~\ref{ass:convex}, $f$ may be nonsmooth, so the gradient
mapping need not be defined. Instead, we follow the standard convergence measure in nonsmooth nonconvex optimization~\cite{davis2019model} and define the Moreau envelope and its
residual by
\begin{equation}
\begin{split}
    \Phi_\ell(x)&:=\min_{y\in\C}
         \{f(y)+\ell\norm{y-x}^2\},\qquad
    Q_\ell(x):=\argmin_{y\in\C}\{f(y)+\ell\norm{y-x}^2\},\\
    \mathcal S_\ell(x)&:=\norm{\nabla\Phi_\ell(x)}
         =2\ell\norm{x-Q_\ell(x)}.
\end{split}
\label{eq:moreau-measure}
\end{equation}
The proximal point $y=Q_\ell(x)$ is unique. Its optimality condition gives
\begin{equation}
    \norm{x-y}=\frac{\mathcal S_\ell(x)}{2\ell},
    \qquad
    \dist(0,\partial f(y)+N_{\C}(y))\leq\mathcal S_\ell(x).
    \label{eq:moreau-intuition}
\end{equation}
Thus a small residual means that $x$ is close to a feasible point with a
small first-order residual. This nearby-point interpretation is useful for
nonsmooth functions, whose subgradients need not vary continuously. It does
not assert a small subgradient residual at the returned point itself.

\begin{theorem}[\textbf{Nonsmooth objective with convex constraints}]
\label{thm:convex-nonsmooth}
Suppose Assumption~\ref{ass:convex} holds. For any $0<\epsilon\leq1$,
run Algorithm~\ref{alg:one-projection} with
$p=p_\gamma$, $\beta=2\ell$, $m=\ell$, the penalty parameters in
\eqref{eq:convex-settings}, and $\mathcal I=\{1,\ldots,T\}$.
There are choices
\begin{equation}
    T=O\!\left(1+\frac{\ell\Delta_f}{\epsilon^2}\right),
    \qquad K=\widetilde O(\epsilon^{-2}),
    \label{eq:convex-nonsmooth-rate}
\end{equation}
specified in Appendix~\ref{app:convex-schedule}, for which the output
satisfies
\[
    \hat x\in\C,
    \qquad
    \Pr\{\mathcal S_\ell(\hat x)\leq\epsilon\}\geq\frac23.
\]
For fixed problem parameters, the algorithm uses
$TK=\widetilde O(\epsilon^{-4})$ stochastic subgradient calls and exactly
one projection onto $\C$.
\end{theorem}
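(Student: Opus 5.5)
The plan runs a proximal-point descent argument on the penalized envelope and then controls two errors: the gap between the softplus and exact penalties, and the distance moved by the single final projection.

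\emph{Step 1: per-iteration inner accuracy.} Each $H_t=F_t+p_\gamma\circ g$ is $\ell$-strongly convex on $\B$. Its stochastic subgradients are bounded by $G_H:=G_f+4\ell+\lambda_0G_g$, since $\norm{z-x_t}\le2$ on $\B$ and $p_\gamma'\le\lambda_0$. The standard strongly convex projected subgradient analysis with steps $\eta_s=2/(\ell(s+1))$ gives an expected gap of $O(G_H^2\log K/(\ell K))$ for the averaged iterate. A martingale concentration bound, or Markov's inequality plus a union bound over $t$, then makes \eqref{eq:generic-inner-gap} hold simultaneously for all $t$ with probability at least $5/6$. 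Here $\epsilon_{\rm in}=\widetilde O(1/K)$, and $K$ is chosen so that $\epsilon_{\rm in}\lesssim\epsilon^2/\ell$.

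\emph{Step 2: exact penalty on $\B$ and the surrogate gap.} Let $\Psi_t(z):=F_t(z)+\lambda_0[g(z)]_+$. Lemma~\ref{lem:convex-eb} gives $\dist(z,\C)\le[g(z)]_+/\sigma$ on $\B$, and $F_t$ is $(G_f+4\ell)$-Lipschitz on $\B$. With $\lambda_0=2(G_f+4\ell)/\sigma$, the usual exact-penalty argument gives two facts.
\begin{itemize}
\item $\min_\B\Psi_t=\min_\C F_t$, attained at the constrained prox point $y_t:=Q_\ell(x_t)$.
\item $(G_f+4\ell)\dist(z,\C)\le\Psi_t(z)-\min_\C F_t$.
\end{itemize}
The softplus satisfies $\lambda_0[u]_+\le p_\gamma(u)\le\lambda_0[u]_++\gamma\log2$. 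Combined with the inner gap, this yields
\[
\Psi_t(x_{t+1})-F_t(y_t)\le\epsilon_{\rm in}+\gamma\log 2=:\delta .
\]
By $\ell$-strong convexity of $F_t$ on $\C$, it follows that $\ell\norm{\Pset_\C(x_{t+1})-y_t}^2\lesssim\delta$ and $\dist(x_{t+1},\C)\lesssim\delta/(G_f+4\ell)$. Hence $\norm{x_{t+1}-y_t}^2=O(\delta/\ell)$ up to these terms.

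\emph{Step 3: descent and telescoping.} Let $\Phi_\ell$ be the envelope from \eqref{eq:moreau-measure}. Evaluating $\Phi_\ell(x_{t+1})$ at the test point $y_t$, and using $f(y_t)+\ell\norm{y_t-x_t}^2=\Phi_\ell(x_t)$, gives
\[
\Phi_\ell(x_{t+1})\le\Phi_\ell(x_t)-\ell\norm{y_t-x_t}^2+\ell\bigl(\norm{y_t-x_{t+1}}^2-\ldots\bigr).
\]
More cleanly, the three-point inequality from strong convexity gives
\[
\Phi_\ell(x_{t+1})\le\Phi_\ell(x_t)-c\,\ell\norm{x_t-y_t}^2+O(\delta).
\]
Telescoping from $\Phi_\ell(x_0)\le f(x_0)$, which uses $x_0\in\C$, down to $\Phi_\ell\ge\min_\B f$ gives
\[
\frac1T\sum_t\mathcal S_\ell(x_t)^2\lesssim\frac{\ell\Delta_f}{T}+\ell\delta .
\]
I expect the index shift between $x_t$ and $x_{t+1}$ relative to $\mathcal I=\{1,\dots,T\}$ to be handled by Step 2, which gives $\norm{x_{t+1}-y_t}$ small. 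With $T\asymp\ell\Delta_f/\epsilon^2$ and $\delta\asymp\epsilon^2/\ell$, Markov's inequality over the random $J$ gives $\mathcal S_\ell(x_J)\le\epsilon/2$ with probability at least $5/6$.

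\emph{Step 4: the projection.} Set $\hat x=\Pset_\C(x_J)$. Then $\norm{\hat x-x_J}=\dist(x_J,\C)=O(\delta)$ by Step 2. Since $\Phi_\ell$ is the Moreau envelope of the $\ell$-weakly convex function $f+\ind_\C$ with parameter $1/(2\ell)$, its gradient is Lipschitz with constant $O(\ell)$. Therefore
\[
\mathcal S_\ell(\hat x)\le\mathcal S_\ell(x_J)+O(\ell\delta)\le\epsilon
\]
once $\delta$ is small enough. A union bound over the failure events gives probability at least $2/3$.

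The main obstacle is Step 3: obtaining a clean sufficient-decrease inequality for $\Phi_\ell$ when $x_{t+1}$ is only an approximate minimizer of a \emph{penalized} proximal problem over $\B$ rather than $\C$. The bridge is the exact-penalty comparison in Step 2, which converts the function gap into proximity to $y_t$ and to $\C$. If the three-point bound is delicate, I would first try bounding $\norm{x_{t+1}-y_t}^2\le 2\delta/\ell$ directly and then expanding $\norm{y_t-x_{t+1}}^2$ inside the envelope inequality. The total oracle cost is $TK=\widetilde O(\epsilon^{-4})$.
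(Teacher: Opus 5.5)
Your proof is correct, but its core step differs from the paper's. The paper never telescopes the envelope. It telescopes $f$ itself along the infeasible iterates, using the feasible comparison point $p_t=\Pset_\C(x_t)$ and the Lipschitz constant of $f$ to get $f(x_{t+1})+\ell\norm{x_{t+1}-x_t}^2\le f(x_t)+G_fd_t+\ell d_t^2+e$. This bounds the average squared outer step. The paper then transfers that bound to the projected points via the $2$-Lipschitz property of $Q_\ell$: $\mathcal S_\ell(p_{t+1})\le4\ell\norm{x_{t+1}-x_t}+2\ell\norm{x_{t+1}-y_t}+6\ell d_{t+1}$.

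You instead run the Davis--Drusvyatskiy envelope descent directly at the infeasible iterates. This is legitimate because $\Phi_\ell$ is finite everywhere and $y_t\in\C$ is an admissible test point for $\Phi_\ell(x_{t+1})$. Your first display is already the clean inequality, $\Phi_\ell(x_{t+1})\le\Phi_\ell(x_t)-\ell\norm{y_t-x_t}^2+\ell\norm{x_{t+1}-y_t}^2$. The bound $\ell\norm{x_{t+1}-y_t}^2\le2\delta$ follows from $\ell$-strong convexity of $\Psi_t$ on $\B$ at its minimizer $y_t$ (your fallback). So the step you flagged as the main obstacle needs no three-point argument.

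Your route gives a descent step that is blind to the infeasibility of $x_t$: there is no $G_fd_t+\ell d_t^2$ correction. Infeasibility enters only through $\norm{x_{t+1}-y_t}$ and the final transfer $\mathcal S_\ell(\hat x)\le\mathcal S_\ell(x_J)+O(\ell)\dist(x_J,\C)$. The paper's route gives reuse. The same step-length bound also drives the smooth theorem, where $\mathcal G_\ell$ is not an envelope gradient. It also controls the measure at $p_{t+1}$ for $t=0,\ldots,T-1$, which matches $\mathcal I=\{1,\ldots,T\}$ exactly.

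Two small repairs are needed.

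\emph{Index shift.} Your telescoping controls $x_0,\ldots,x_{T-1}$, while $J$ ranges over $\{1,\ldots,T\}$. The shift follows from $\mathcal S_\ell(x_{t+1})\le2\mathcal S_\ell(x_t)+6\ell\norm{x_{t+1}-y_t}$. This is a consequence of the $2$-Lipschitz property of $Q_\ell$, or equivalently of the $O(\ell)$-Lipschitz envelope gradient you invoke in Step~4.

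\emph{Step~1.} Only the concentration option preserves the rate. Markov plus a union bound over $T$ rounds forces the per-round threshold up to $6T$ times the mean gap. Then $K$ must scale like $T\epsilon^{-2}$, and $TK$ like $\epsilon^{-6}$. Either use a tail bound with $\log(1/\delta)$ dependence, as in the paper's inner lemma, or keep everything in expectation and apply Markov once at the end. The latter works because the inner error enters your bounds linearly, through $\ell\norm{x_{t+1}-y_t}^2\le2\delta_t$ and $\dist(x_{t+1},\C)\le\delta_t/(G_f+4\ell)$.
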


\begin{remark}
The rate matches the standard $O(\epsilon^{-4})$ Moreau-envelope
stationarity bound for weakly convex stochastic optimization
\citep{davis2019guided,davis2019model}, up to logarithms, with one final
projection. In the smooth unconstrained case, a small Moreau residual also
bounds the gradient norm up to a constant, connecting this measure to the
smooth lower-bound benchmark. The same oracle-model qualification applies.
\end{remark}

\subsection{Nonconvex Objective with a Nonconvex Constraint Set}
\label{sec:nonconvex}

We now allow $g$ to be nonconvex and $x_0$ to be infeasible. The following
condition excludes infeasible stationary points of the constraint violation,
including points where the boundary of $\B$ blocks further progress.

\begin{assumption}[Nonconvex constraint and infeasible initialization]
\label{ass:nonconvex}
\leavevmode
\begin{itemize}
\setlength{\itemsep}{2pt}
    \item The second item of Assumption~\ref{ass:convex} holds for the
    objective and its oracle. The set $\C$ is nonempty, closed, and
    contained in $\operatorname{int}\B$. The initial point $x_0\in\B$
    is arbitrary.
    \item The function $g$ is continuously differentiable with an
    $L_g$-Lipschitz gradient on an open convex neighborhood of $\B$,
    where $L_g>0$. Its values and gradients are available exactly.
    Write $G_g:=\max_{x\in\B}\norm{\nabla g(x)}$.
    \item There are $\sigma_{\rm bd},\sigma_{\rm out}>0$ such that
    \begin{align}
        \norm{\nabla g(z)}&\geq\sigma_{\rm bd}
           &&(z\in\B,\ g(z)=0),\label{eq:boundary-cq}\\
        \dist(0,\nabla g(x)+N_{\B}(x))&\geq\sigma_{\rm out}
           &&(x\in\B,\ g(x)>0).\label{eq:infeasible-slope}
    \end{align}
\end{itemize}
\end{assumption}

Condition \eqref{eq:infeasible-slope} is a global sufficient condition for
this guarantee, not a consequence of boundary regularity. It says that, within $\B$, as long as the constraint is not satisfied, one can always find a direction to decrease the constraint violation. It is stronger
than simply requiring $\nabla g\ne0$ on $g=0$. Appendix~\ref{app:obstruction}
gives a smooth counterexample to removing it from the present argument.
Meanwhile, \eqref{eq:boundary-cq} and smoothness imply that $\C$ has a
unique projection in the tube of radius
$R:=\sigma_{\rm bd}/L_g$; no separate geometric assumption is needed
(Lemma~\ref{lem:geometry}).

\paragraph{Algorithm: the same subroutine with an exact penalty.}
Consider $\Psi_\lambda(x):=f(x)+\lambda[g(x)]_+$ on $\B$ and choose
\begin{equation}
    \lambda>\frac{G_f}{\sigma_{\rm out}},\qquad
    \beta>\ell+\lambda L_g,\qquad
    m:=\beta-\ell-\lambda L_g.
    \label{eq:nc-parameters}
\end{equation}
Run Algorithm~\ref{alg:one-projection} with $p(u)=\lambda[u]_+$,
these $\beta,m$, and
\begin{equation}
    \mathcal I=\{0,\ldots,T-1\}.
    \label{eq:nc-settings}
\end{equation}
For the composite penalty, select $\lambda\nabla g(z)$ when $g(z)>0$
and $0$ when $g(z)\leq0$. At $g(z)=0$, zero is a valid subgradient.
Here $p\circ g$ is $\lambda L_g$-weakly convex, so the inner objective
$H_t(z)=\Psi_\lambda(z)+\frac\beta2\norm{z-x_t}^2$ is $m$-strongly
convex over the ball. The parameter choices are sequential and compatible:
choose $\lambda$ first and then $\beta$. Neither the inner nor the outer
iterates are required to be feasible, and there is no feasibility-restoration
phase before the final line of the algorithm.

\paragraph{Convergence measure.}
Our goal is an exactly feasible output $\hat x$ that is close to a
near-stationary point of the \emph{original} constrained problem:
\begin{equation}
    \exists y\in\C:\quad
    \norm{\hat x-y}\leq\delta,
    \qquad \dist(0,\partial f(y)+N_{\C}(y))\leq\epsilon.
    \label{eq:near-stationarity}
\end{equation}
The two tolerances distinguish proximity from first-order accuracy. This is
the same nearby-point interpretation underlying
\eqref{eq:moreau-intuition}; it does not assert a small nonsmooth KKT
residual at $\hat x$ itself.
To prove it, we track the auxiliary penalty residual
\begin{equation}
    Q_{\lambda,\beta}(x):=\argmin_{y\in\B}
       \left\{\Psi_\lambda(y)+\frac\beta2\norm{y-x}^2\right\},
    \qquad
    \mathcal R_{\lambda,\beta}(x):=
       \beta\norm{x-Q_{\lambda,\beta}(x)}.
    \label{eq:nc-residual}
\end{equation}
The key is not merely to make a penalized problem stationary. Once this
residual is below the positive margin
$h_{\rm feas}:=\lambda\sigma_{\rm out}-G_f$, its proximal witness
$y=Q_{\lambda,\beta}(x)$ must be feasible. Its optimality condition then
certifies original-problem stationarity, and projecting $x$ changes its
distance to that witness by at most a factor of two.

\begin{theorem}[\textbf{Nonsmooth objective with nonconvex constraints}]
\label{thm:nonconvex}
Suppose Assumption~\ref{ass:nonconvex} holds and use
\eqref{eq:nc-parameters}--\eqref{eq:nc-settings}. Let
$\Delta_\lambda:=\Psi_\lambda(x_0)-\min_{x\in\B}\Psi_\lambda(x)$.
For $0<\epsilon<\min\{h_{\rm feas},\beta R,1\}$, there are choices
\begin{equation}
    T=O\!\left(1+\frac{\beta\Delta_\lambda}{\epsilon^2}\right),
    \qquad K=\widetilde O(\epsilon^{-2}),
    \label{eq:nc-rate}
\end{equation}
specified in Appendix~\ref{app:nc-schedule}, with the following guarantee.
The output is exactly feasible. With probability at least $2/3$, the final
projection is unique, $\mathcal R_{\lambda,\beta}(x_J)\leq\epsilon$, and
there exist $y\in\C$, $v\in\partial f(y)$, and $\mu\in[0,\lambda]$ such that
\begin{equation}
    \norm{\hat x-y}\leq\frac{2\epsilon}{\beta},\qquad
    \norm{v+\mu\nabla g(y)}\leq\epsilon,\qquad \mu g(y)=0.
    \label{eq:nc-guarantee}
\end{equation}
For fixed problem parameters, the algorithm uses
$TK=\widetilde O(\epsilon^{-4})$ stochastic subgradients and exactly one
projection onto $\C$.
\end{theorem}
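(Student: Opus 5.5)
The proof splits into an optimization part, which shows that a randomly selected outer iterate has small penalty residual $\mathcal R_{\lambda,\beta}(x_J)$, and a geometric part, which turns that small residual into the KKT-type certificate \eqref{eq:nc-guarantee} and controls the single projection.

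\emph{Step 1: outer descent.} Let $\hat z_t:=Q_{\lambda,\beta}(x_t)$ be the exact minimizer of $H_t$ over $\B$. The key comparison is $\Psi_\lambda(x_{t+1})+\frac\beta2\norm{x_{t+1}-x_t}^2=H_t(x_{t+1})\le H_t(\hat z_t)+\epsilon_{\rm in}$. By $m$-strong convexity of $H_t$,
\[
H_t(\hat z_t)\le H_t(x_t)-\frac m2\norm{x_t-\hat z_t}^2=\Psi_\lambda(x_t)-\frac{m}{2\beta^2}\mathcal R_{\lambda,\beta}(x_t)^2 .
\]
Telescoping over $t=0,\dots,T-1$ gives
\[
\frac1T\sum_{t=0}^{T-1}\mathcal R_{\lambda,\beta}(x_t)^2\le\frac{2\beta^2}{m}\Bigl(\frac{\Delta_\lambda}{T}+\epsilon_{\rm in}\Bigr).
\]
This is why the index set is $\{0,\dots,T-1\}$: the residual is evaluated at the centers $x_t$. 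Markov's inequality applied to the random index $J$ leaves probability at least $5/6$ for $\mathcal R_{\lambda,\beta}(x_J)^2\le 6\times$ that average. Choosing $T\asymp\beta\Delta_\lambda/\epsilon^2$ and $\epsilon_{\rm in}\asymp\epsilon^2/\beta$ (treating $m,\beta$ as constants) then makes $\mathcal R_{\lambda,\beta}(x_J)\le\epsilon$.

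\emph{Step 2: inner gap.} The inner gap \eqref{eq:generic-inner-gap} follows from the standard high-probability analysis of projected SGD with steps $2/(m(s+1))$ on an $m$-strongly convex objective with bounded stochastic subgradients. The subgradient bound is $\norm{h_s}\le G_f+2\beta+\lambda G_g$, and the noise is a bounded martingale difference. This gives a gap of $\widetilde O(\log(T/\delta)/(mK))$ for the average iterate. A union bound over the $T$ inner solves, at failure level $1/(6T)$ each, gives $K=\widetilde O(\epsilon^{-2})$ and a total failure probability of at most $1/3$.

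\emph{Step 3: certificate.} Let $y=Q_{\lambda,\beta}(x_J)$. Its optimality condition gives $v\in\partial f(y)$ and $\mu\in\lambda\,\partial[\cdot]_+(g(y))\subseteq[0,\lambda]$ such that
\[
w:=\beta(x_J-y)\in v+\mu\nabla g(y)+N_\B(y),\qquad\norm{w}\le\epsilon.
\]
Suppose $g(y)>0$. Then $\mu=\lambda$, so $w-v=\lambda\nabla g(y)+n$ with $n\in N_\B(y)$. Dividing by $\lambda$ and applying \eqref{eq:infeasible-slope},
\[
\lambda\sigma_{\rm out}\le\norm{w-v}\le\epsilon+G_f,
\]
which contradicts $\epsilon<h_{\rm feas}$. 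Hence $y\in\C\subset\operatorname{int}\B$, so $N_\B(y)=\{0\}$, and $\norm{v+\mu\nabla g(y)}\le\epsilon$. If $g(y)<0$, the subdifferential of the hinge forces $\mu=0$; if $g(y)=0$, complementarity $\mu g(y)=0$ holds trivially. Moreover $\norm{x_J-y}\le\epsilon/\beta<R$.

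\emph{Step 4: the projection.} Since $\dist(x_J,\C)\le\norm{x_J-y}<R$, the point $x_J$ lies in the unique-projection tube of Lemma~\ref{lem:geometry}, so the projection $\hat x$ is unique and satisfies $\norm{x_J-\hat x}\le\norm{x_J-y}$. The triangle inequality then gives
\[
\norm{\hat x-y}\le2\norm{x_J-y}\le\frac{2\epsilon}{\beta}.
\]
Exact feasibility of $\hat x$ holds by construction, since $\C$ is closed and some projection always exists.

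\emph{Main obstacle.} The bookkeeping is the delicate part: the parameter schedule must make the inner error, the Markov step, and the union bound fit inside $2/3$, and the high-probability strongly convex SGD bound must be available for nonsmooth $H_t$. I would simply cite a standard such bound, or settle for an expectation bound plus Markov, at the cost of an extra $1/\epsilon$ factor that I would avoid by using the high-probability version. The total oracle count is then $TK=\widetilde O(\epsilon^{-4})$, with exactly one projection onto $\C$.
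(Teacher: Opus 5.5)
Your proof is correct. Steps 2--4 coincide with the paper's argument:
\begin{itemize}
\item a high-probability bound for the strongly convex inner solver, with a union bound at level $1/(6T)$ (the paper proves this as a self-contained lemma rather than citing it);
\item the infeasible-slope contradiction, which forces $y=Q_{\lambda,\beta}(x_J)$ into $\C\subset\operatorname{int}\B$ and removes the ball normal;
\item the factor-two triangle inequality inside the unique-projection tube.
\end{itemize}

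The genuine difference is Step 1. You apply quadratic growth of $H_t$ at the center $x_t$ and telescope $\mathcal R_{\lambda,\beta}(x_t)^2$ directly, which gives $\frac{2\beta^2}{m}(\Delta_\lambda/T+e)$. The paper instead uses only $H_t(y_t)\le H_t(x_t)$ to obtain the sufficient decrease $\Psi_\lambda(x_{t+1})+\frac\beta2\norm{x_{t+1}-x_t}^2\le\Psi_\lambda(x_t)+e$. It invokes strong convexity only for $\norm{x_{t+1}-y_t}^2\le 2e/m$, and combines the two to get $4\beta\Delta_\lambda/T+4\beta(1+\beta/m)e$.

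Your route is a single telescope and even has the smaller coefficient on the inner error. However, it puts a factor $\beta/m$ on the $\Delta_\lambda/T$ term, so it only yields $T=O(1+\beta^2\Delta_\lambda/(m\epsilon^2))$. This matches the displayed $T=O(1+\beta\Delta_\lambda/\epsilon^2)$ only when $\beta/m$ is bounded, for instance $\beta=2(\ell+\lambda L_g)$. Your phrase ``treating $m,\beta$ as constants'' silently assumes exactly this. The paper's split keeps $m$ out of the outer iteration count, which matters when $\beta$ is chosen close to $\ell+\lambda L_g$.

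One correction to your closing remark: an in-expectation inner bound would cost nothing, not an extra factor in $\epsilon$. Both descent inequalities are linear in the per-run inner gaps, so you can bound $\E[\mathcal R_{\lambda,\beta}(x_J)^2]=\E[\frac1T\sum_t\mathcal R_{\lambda,\beta}(x_t)^2]$ directly and apply Markov once. This needs no union bound over $t$ and loses nothing in $\epsilon$.
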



\begin{remark}
Here we also achieve the standard
$\epsilon^{-4}$ rate.
The stochastic-oracle exponent is unchanged, although its constants now
depend on the infeasible-slope margin and the penalty's weak convexity.
Unlike the convex-constraint results above, this theorem does not require a feasible initial
point. It uses no projections onto local feasible-set models and does not
keep the intermediate iterates feasible. It does, however, assume an oracle
that returns an \emph{exact nearest point} in the last line. Compactness
ensures such a point exists on every outcome; uniqueness is guaranteed on
the stated success event. Local uniqueness is not a computational algorithm
for finding that point. Thus our bound counts one call to a potentially
expensive projection oracle and does not claim polynomial-time nonconvex projection.
For smooth $f$, replace $v$ in \eqref{eq:nc-guarantee} by $\nabla f(y)$ and we can keep the same rate.
\end{remark}

\section{Proof Overview}
\label{sec:overview}
We explain the two different roles of the penalty and then show why the
single final projection is sufficient. The complete proofs, including all
parameter choices and probability bounds, are in the appendix.

\noindent\textbf{Convex Constraints: Control Infeasibility Before Projecting.}~The inner objective is strongly convex even when $f$ is nonsmooth.
A self-contained stochastic subgradient bound gives inner function error
$\widetilde O(1/K)$ (Lemma~\ref{lem:inner}). The softplus differs from
$\lambda_0[g]_+$ by at most $\gamma\log2$. Consequently, with a common
inner error $e=\widetilde O(1/K)$ and $y_t=Q_\ell(x_t)$,
\begin{equation}
    F_t(x_{t+1})+\lambda_0[g(x_{t+1})]_+
       \leq F_t(y_t)+e.
    \label{eq:overview-inner}
\end{equation}
Set $G_F:=G_f+4\ell$, a Lipschitz bound for $F_t$ on the unit ball.
The error bound for $g$ and the choice $\lambda_0\sigma=2G_F$ imply
\begin{equation}
    d_{t+1}:=\dist(x_{t+1},\C)\leq e/G_F,
    \qquad
    \norm{x_{t+1}-y_t}^2\leq2e/\ell.
    \label{eq:overview-feasibility}
\end{equation}
The first estimate controls infeasibility without computing a projection.
The second follows from strong convexity of the exact-penalty subproblem.

For the descent step, use $p_t=\Pset_{\C}(x_t)$ only as an analytical
comparison point. Since $y_t$ minimizes $F_t$ over $\C$,
\begin{equation}
    \ell\norm{x_{t+1}-x_t}^2
    \leq f(x_t)-f(x_{t+1})+G_f d_t+\ell d_t^2+e.
    \label{eq:overview-descent}
\end{equation}
 Summing
\eqref{eq:overview-descent} and using \eqref{eq:overview-feasibility}
controls the average squared outer displacement. For smooth $f$,
nonexpansiveness of the convex projection and Lipschitz continuity of
$\nabla f$ transfer this control to $\mathcal G_\ell(p_{t+1})^2$.
For nonsmooth $f$, Lipschitz continuity of $Q_\ell$ instead controls
$\mathcal S_\ell(p_{t+1})^2$. Both bounds have the form
$O(T^{-1}+e+e^2)$ for fixed problem parameters. Taking
$T=O(\epsilon^{-2})$ and $e=O(\epsilon^2)$ proves
Theorems~\ref{thm:convex-smooth} and~\ref{thm:convex-nonsmooth} after
random selection and one final projection.

\noindent\textbf{Nonconvex Constraints: Make the Proximal Witness Feasible.}~We do not try to make every penalized subproblem an exact reformulation of
a constrained proximal problem. Instead, $\Psi_\lambda$ is a fixed weakly
convex objective, and $\beta$ makes its proximal subproblems strongly
convex. Inexact proximal descent gives
\begin{equation}
    \frac1T\sum_{t=0}^{T-1}\mathcal R_{\lambda,\beta}(x_t)^2
    \leq \frac{4\beta\Delta_\lambda}{T}
         +4\beta\left(1+\frac\beta m\right)e.
    \label{eq:overview-nc-descent}
\end{equation}
This is the stochastic progress argument. The feasibility argument is
separate. If a proximal witness $y=Q_{\lambda,\beta}(x)$ were infeasible,
its optimality condition would read
\[
    \beta(x-y)=v+\lambda\nabla g(y)+n,
    \qquad v\in\partial f(y),\quad n\in N_{\B}(y).
\]
Assumption~\ref{ass:nonconvex} would then imply
$\mathcal R_{\lambda,\beta}(x)\geq
\lambda\sigma_{\rm out}-G_f=h_{\rm feas}$.
Hence a smaller residual forces $y\in\C$. Because $\C\subset\operatorname{int}\B$,
the artificial ball normal disappears, and the same optimality condition
is an approximate KKT condition for \eqref{eq:problem}.

Finally, for any nearest point $p\in\Pset_{\C}(x)$, $
    \norm{p-y}\leq\norm{p-x}+\norm{x-y}
       \leq2\norm{x-y}
       =\frac{2\mathcal R_{\lambda,\beta}(x)}{\beta}.$
Thus the final correction preserves proximity to the KKT witness without
requiring a globally nonexpansive nonconvex projection. The small residual
also places $x$ in the unique-projection tube. Combining this observation
with \eqref{eq:overview-nc-descent} proves Theorem~\ref{thm:nonconvex}.

\section{Experiments}
\label{sec:experiments}
We compare One Projection with Projected SGD on transfer learning and
classification with abstention, reporting means and sample standard
deviations over ten runs. Appendix~\ref{app:experimental-details} provides
the experimental settings, projection procedures, and additional results.

\subsection{Transfer Learning}
To adapt to a target distribution while retaining source performance, we
consider constrained empirical risk minimization
\citep{hanneke2024adaptive,NEURIPS2025_85d4d3ba}:
\[
    \min_{\theta} R_T(\theta)
    \qquad\text{subject to}\qquad R_S(\theta)\leq\tau_S,
\]
where $R_S,R_T$ are empirical logistic risks and
$\tau_S=1/\sqrt{n_S}$ for $n_S$ source observations.
We use a linear model on synthetic Gaussian data and a Softplus neural
network on ClimSim rainfall data \citep{yu2023climsim}, giving convex and
nonconvex reference problems, respectively.

Table~\ref{tab:transfer-results} shows comparable target errors and logistic
risks, with $6.9\times$ and $43.2\times$ lower mean runtime for One Projection.
Runtime includes source-reference fitting and constrained target optimization.
Final source risks satisfy the numerical tolerance; update budgets differ
across methods (Appendix~\ref{app:transfer-details}).
Figure~\ref{fig:transfer-training-runtime} summarizes runtimes for both tasks.
\begin{table}[!htbp]
\centering
\caption{Transfer learning: target test performance and training runtime
(mean $\pm$ sample standard deviation over ten runs). Update budgets and
source-risk diagnostics are given in Appendix~\ref{app:transfer-details}.}
\label{tab:transfer-results}
\small
\setlength{\tabcolsep}{5pt}
\renewcommand{\arraystretch}{1.12}
\begin{tabular}{@{}llccc@{}}
\toprule
Dataset & Method & \makecell{Target error\\(\%)} &
\makecell{Target logistic\\risk} & Runtime (s) \\
\midrule
\multirow{2}{*}{Gaussian} & One Projection & $10.00\pm0.81$ & $0.253\pm0.030$ & $9.74\pm0.23$ \\
 & Projected SGD & $9.92\pm0.66$ & $0.252\pm0.033$ & $67.55\pm17.90$ \\
\addlinespace[3pt]
\multirow{2}{*}{ClimSim} & One Projection & $7.26\pm1.31$ & $0.333\pm0.079$ & $15.25\pm2.12$ \\
 & Projected SGD & $7.13\pm1.86$ & $0.323\pm0.073$ & $658.91\pm116.26$ \\
\bottomrule
\end{tabular}
\end{table}

\definecolor{opblue}{HTML}{0072B2}
\definecolor{psgdorange}{HTML}{D55E00}
\begin{figure} [b]
\centering
\begin{tikzpicture}
\begin{groupplot}[
 group style={group size=2 by 1,horizontal sep=1.3cm},
 width=0.47\textwidth,height=4.2cm,
 ybar=2pt,/pgf/bar width=8pt,ymode=log,log origin=infty,
 ymin=1,ymax=3000,ytick={1,10,100,1000},
 ymajorgrids=true,grid style={gray!20},tick align=outside,
 tick label style={font=\scriptsize},title style={font=\small},
 label style={font=\small},xticklabel style={align=center},
 enlarge x limits=0.25,
 legend style={at={(0.5,1.04)},anchor=south,legend columns=2,
   font=\scriptsize,draw=none},
 error bars/y dir=both,error bars/y explicit,
 error bars/error bar style={draw=black},
 error bars/error mark options={draw=black,mark size=2pt}
]
\nextgroupplot[title={Transfer learning},ylabel={Runtime (s)},
 xtick={0,1},xticklabels={Gaussian,ClimSim}]
\addplot+[fill=opblue,draw=opblue] coordinates
 {(0,9.74) +- (0,0.23) (1,15.25) +- (0,2.12)};
\addplot+[fill=psgdorange,draw=psgdorange] coordinates
 {(0,67.55) +- (0,17.90) (1,658.91) +- (0,116.26)};
\nextgroupplot[title={Classification with abstention},
 xtick={0,1,2,3},xticklabels={Gaussian,ClimSim,{NASA\\rainfall},{Credit\\default}}]
\addplot+[fill=opblue,draw=opblue] coordinates
 {(0,1.73) +- (0,0.31) (1,5.66) +- (0,0.61)
  (2,4.91) +- (0,0.18) (3,5.63) +- (0,0.70)};
\addplot+[fill=psgdorange,draw=psgdorange] coordinates
 {(0,18.69) +- (0,1.30) (1,353.63) +- (0,42.62)
  (2,940.30) +- (0,223.84) (3,1167.85) +- (0,237.30)};
\end{groupplot}
\end{tikzpicture}

\smallskip
{\small\textcolor{opblue}{\rule{10pt}{5pt}}~One Projection
\qquad\textcolor{psgdorange}{\rule{10pt}{5pt}}~Projected SGD}
\caption{Training runtime over ten runs (mean $\pm$ sample standard deviation;
logarithmic scale). Transfer includes source-reference fitting. Update budgets
and optimizer-specific thresholds are given in Appendix~\ref{app:experimental-details}.}
\label{fig:transfer-training-runtime}
\label{fig:combined-training-runtime}
\end{figure}

Figure~\ref{fig:climsim-transfer-training-histories} shows the target validation
risk, target training risk, and source risk during optimization.
One Projection allows intermediate source-risk violations and restores
numerical feasibility at the terminal projection. Both methods reach
similar final target risks, with the update budgets reported in
Appendix~\ref{app:transfer-details}.
\input{ICLR/figures/transfer_climsim}

\subsection{Classification with Abstention}
Following \citet{kalan2026classification}, we learn scores $s_0,s_1$ that
accept class $j$ when $s_j(x)\geq0$ and abstain on simultaneous acceptance.
Using the hinge surrogate $\phi(u)=[1+u]_+$, we minimize ambiguity subject
to class-specific error constraints:
\[
    \min_{\theta_0,\theta_1}
        \widehat R_{\mathrm{amb},\phi}(\theta_0,\theta_1)
    \qquad\text{subject to}\qquad
        \widehat R_{\phi,j}(\theta_j)\leq\beta_j+\epsilon_j,
        \quad j\in\{0,1\}.
\]
Here $\widehat R_{\mathrm{amb},\phi}$ is the empirical product surrogate,
$\widehat R_{\phi,j}$ is the class-$j$ surrogate error, and $\epsilon_j$
is a constraint slack. The objective is generally nonconvex; neural-network
scores also yield nonconvex constraints.

We use linear scores on Gaussian data and neural networks on ClimSim,
NASA POWER rainfall \citep{stackhouse2021prediction}, and Give Me Some Credit
\citep{kaggle2011givemecredit}. At 500 updates, One Projection attains similar
errors with comparable or lower ambiguity, using $10.8$--$207.4\times$ less
runtime (Table~\ref{tab:combined-abstention-results}).
Figure~\ref{fig:climsim-rainfall-training-histories} shows the ClimSim training
curves. These compare validation-calibrated tradeoffs: some surrogate
thresholds differ by optimizer, and nominal test-error targets are not
uniformly met (Appendix~\ref{app:abstention-details}).
\begin{table}[!htbp]
\centering
\caption{Classification with abstention: test errors, ambiguity, and training
runtime (mean $\pm$ sample standard deviation over ten runs).
OP denotes One Projection and PSGD denotes Projected SGD.
The validation-calibrated thresholds are listed in
Table~\ref{tab:abstention-calibration}.}
\label{tab:combined-abstention-results}
\small
\setlength{\tabcolsep}{4pt}
\renewcommand{\arraystretch}{1.12}
\begin{tabular}{@{}llcccc@{}}
\toprule
Dataset & Method & \makecell{Class 0\\error (\%)} &
\makecell{Class 1\\error (\%)} & Ambiguity (\%) & Runtime (s) \\
\midrule
\multirow{2}{*}{Gaussian} & OP & $12.84\pm2.60$ & $13.89\pm2.59$ & $11.10\pm2.28$ & $1.73\pm0.31$ \\
 & PSGD & $12.99\pm2.50$ & $14.04\pm2.24$ & $11.15\pm2.53$ & $18.69\pm1.30$ \\
\addlinespace[3pt]
\multirow{2}{*}{ClimSim} & OP & $9.35\pm0.43$ & $5.00\pm1.00$ & $1.72\pm0.44$ & $5.66\pm0.61$ \\
 & PSGD & $9.00\pm0.86$ & $5.09\pm1.73$ & $2.95\pm1.03$ & $353.63\pm42.62$ \\
\addlinespace[3pt]
\multirow{2}{*}{NASA rainfall} & OP & $9.68\pm0.42$ & $9.55\pm1.74$ & $9.17\pm0.76$ & $4.91\pm0.18$ \\
 & PSGD & $9.80\pm0.41$ & $10.74\pm2.84$ & $8.62\pm1.75$ & $940.30\pm223.84$ \\
\addlinespace[3pt]
\multirow{2}{*}{Credit default} & OP & $16.75\pm2.27$ & $21.20\pm2.54$ & $14.95\pm2.89$ & $5.63\pm0.70$ \\
 & PSGD & $17.80\pm2.04$ & $20.44\pm2.96$ & $16.05\pm2.95$ & $1167.85\pm237.30$ \\
\bottomrule
\end{tabular}
\end{table}

\input{ICLR/figures/abstention_climsim}

\FloatBarrier
\section{Conclusion}
We presented a penalized proximal subgradient method for nonconvex
stochastic optimization with one terminal projection. The method handles
convex and nonconvex constraints with $\widetilde O(\epsilon^{-4})$
stochastic subgradient calls under the corresponding regularity assumptions.
Experiments on transfer learning and classification with abstention show
comparable predictive performance to Projected SGD at substantially lower
runtime.

\clearpage
\bibliographystyle{plainnat}
\bibliography{ICLR/one_projection}

@misc{kalan2026classification,
  title         = {Classification with Abstention Under Class-Conditional Error Constraints},
  author        = {Kalan, Mohammadreza M. and Deng, Yuyang and Hamidi, Sanaz},
  year          = {2026},
  eprint        = {2609.22632},
  url           = {https://arxiv.org/abs/2609.22632}
}

@inproceedings{mahdavi2012one,
  author = {Mehrdad Mahdavi and Tianbao Yang and Rong Jin and Shenghuo Zhu and Jinfeng Yi},
  title = {Stochastic Gradient Descent with Only One Projection},
  booktitle = {Advances in Neural Information Processing Systems},
  volume = {25},
  year = {2012},
  url = {https://papers.neurips.cc/paper_files/paper/2012/hash/c52f1bd66cc19d05628bd8bf27af3ad6-Abstract.html}
}

@inproceedings{yang2017richer,
  author = {Tianbao Yang and Qihang Lin and Lijun Zhang},
  title = {A Richer Theory of Convex Constrained Optimization with Reduced Projections and Improved Rates},
  booktitle = {Proceedings of the 34th International Conference on Machine Learning},
  series = {Proceedings of Machine Learning Research},
  volume = {70},
  pages = {3901--3910},
  year = {2017},
  publisher = {PMLR},
  url = {https://proceedings.mlr.press/v70/yang17f.html}
}

@article{hanneke2024adaptive,
  title={Adaptive sample aggregation in transfer learning},
  author={Hanneke, Steve and Kpotufe, Samory},
  journal={arXiv preprint arXiv:2408.16189},
  year={2024}
}

@inproceedings{jaggi2013frankwolfe,
  author = {Martin Jaggi},
  title = {Revisiting {Frank--Wolfe}: Projection-Free Sparse Convex Optimization},
  booktitle = {Proceedings of the 30th International Conference on Machine Learning},
  series = {Proceedings of Machine Learning Research},
  volume = {28},
  pages = {427--435},
  year = {2013},
  publisher = {PMLR},
  url = {https://proceedings.mlr.press/v28/jaggi13.html}
}

@inproceedings{NEURIPS2025_85d4d3ba,
 author = {Deng, Yuyang and Kpotufe, Samory},
 booktitle = {Advances in Neural Information Processing Systems},
 doi = {10.52202/085713-3108},
 editor = {D. Belgrave and C. Zhang and H. Lin and R. Pascanu and P. Koniusz and M. Ghassemi and N. Chen},
 pages = {92917--92954},
 publisher = {Curran Associates, Inc.},
 title = {Mixed-Sample SGD: an End-to-end Analysis of Supervised Transfer Learning},
 url = {https://proceedings.neurips.cc/paper_files/paper/2025/file/85d4d3ba2d10ca59a3cff87d5692474f-Paper-Conference.pdf},
 volume = {38, Main Conference},
 year = {2025}
}

@inproceedings{
kalan2026neymanpearson,
title={Neyman-Pearson Classification under Both Null and Alternative Distributions Shift},
author={Mohammadreza Mousavi Kalan and Yuyang Deng and Eitan J. Neugut and Samory Kpotufe},
booktitle={The Fourteenth International Conference on Learning Representations},
year={2026},
url={https://openreview.net/forum?id=pHckxhmBlI}
}

@article{davis2019guided,
  author = {Damek Davis and Benjamin Grimmer},
  title = {Proximally Guided Stochastic Subgradient Method for Nonsmooth, Nonconvex Problems},
  journal = {SIAM Journal on Optimization},
  volume = {29},
  number = {3},
  pages = {1908--1930},
  year = {2019},
  doi = {10.1137/17M1151031},
  url = {https://doi.org/10.1137/17M1151031}
}

@article{davis2019model,
  author = {Damek Davis and Dmitriy Drusvyatskiy},
  title = {Stochastic Model-Based Minimization of Weakly Convex Functions},
  journal = {SIAM Journal on Optimization},
  volume = {29},
  number = {1},
  pages = {207--239},
  year = {2019},
  doi = {10.1137/18M1178244},
  url = {https://doi.org/10.1137/18M1178244}
}

@article{davis2025sets,
  author = {Damek Davis and Dmitriy Drusvyatskiy and Zhan Shi},
  title = {Stochastic Optimization Over Proximally Smooth Sets},
  journal = {SIAM Journal on Optimization},
  volume = {35},
  number = {1},
  pages = {157--179},
  year = {2025},
  doi = {10.1137/20M1320225},
  url = {https://doi.org/10.1137/20M1320225}
}

@inproceedings{ma2020quadratic,
  author = {Runchao Ma and Qihang Lin and Tianbao Yang},
  title = {Quadratically Regularized Subgradient Methods for Weakly Convex Optimization with Weakly Convex Constraints},
  booktitle = {Proceedings of the 37th International Conference on Machine Learning},
  series = {Proceedings of Machine Learning Research},
  volume = {119},
  pages = {6554--6564},
  year = {2020},
  publisher = {PMLR},
  url = {https://proceedings.mlr.press/v119/ma20d.html}
}

@misc{boob2019functional,
  author = {Digvijay Boob and Qi Deng and Guanghui Lan},
  title = {Stochastic First-order Methods for Convex and Nonconvex Functional Constrained Optimization},
  year = {2019},
  howpublished = {arXiv:1908.02734},
  url = {https://arxiv.org/abs/1908.02734}
}

@misc{jia2022firstorder,
  author = {Zhichao Jia and Benjamin Grimmer},
  title = {First-Order Methods for Nonsmooth Nonconvex Functional Constrained Optimization with or without {Slater} Points},
  year = {2022},
  howpublished = {arXiv:2212.00927},
  url = {https://arxiv.org/abs/2212.00927}
}

@misc{liu2025spider,
  author = {Wei Liu and Yangyang Xu},
  title = {A {SPIDER}-type Stochastic Subgradient Method for Expectation-constrained Nonconvex Nonsmooth Optimization},
  year = {2025},
  howpublished = {arXiv:2501.19214v2},
  url = {https://arxiv.org/abs/2501.19214v2}
}

@inproceedings{zhang2025alignment,
  title     = {Alignment of Large Language Models with Constrained Learning},
  author    = {Zhang, Botong and Li, Shuo and Hounie, Ignacio and
               Bastani, Osbert and Ding, Dongsheng and Ribeiro, Alejandro},
  booktitle = {Advances in Neural Information Processing Systems},
  volume    = {38},
  year      = {2025},
  doi       = {10.52202/085713-1042}
}

@inproceedings{peng2025enhancing,
  title     = {Enhancing Safety in Reinforcement Learning with Human Feedback
               via Rectified Policy Optimization},
  author    = {Peng, Xiyue and Guo, Hengquan and Zhang, Jiawei and
               Zou, Dongqing and Shao, Ziyu and Wei, Honghao and Liu, Xin},
  booktitle = {Advances in Neural Information Processing Systems},
  volume    = {38},
  year      = {2025},
  doi       = {10.52202/085713-2120}
}

@inproceedings{hu2022lora,
  title     = {{LoRA}: Low-Rank Adaptation of Large Language Models},
  author    = {Hu, Edward J. and Shen, Yelong and Wallis, Phillip and
               Allen-Zhu, Zeyuan and Li, Yuanzhi and Wang, Shean and
               Wang, Lu and Chen, Weizhu},
  booktitle = {International Conference on Learning Representations},
  year      = {2022},
  url       = {https://openreview.net/forum?id=nZeVKeeFYf9}
}

@inproceedings{zhang2023adalora,
  title     = {Adaptive Budget Allocation for Parameter-Efficient Fine-Tuning},
  author    = {Zhang, Qingru and Chen, Minshuo and Bukharin, Alexander and
               He, Pengcheng and Cheng, Yu and Chen, Weizhu and Zhao, Tuo},
  booktitle = {International Conference on Learning Representations},
  year      = {2023},
  url       = {https://openreview.net/forum?id=lq62uWRJjiY}
}

@inproceedings{jang2024lora,
  title     = {{LoRA} Training in the {NTK} Regime has No Spurious Local Minima},
  author    = {Jang, Uijeong and Lee, Jason D. and Ryu, Ernest K.},
  booktitle = {Proceedings of the 41st International Conference on Machine Learning},
  series    = {Proceedings of Machine Learning Research},
  volume    = {235},
  pages     = {21306--21328},
  year      = {2024},
  publisher = {PMLR},
  url       = {https://proceedings.mlr.press/v235/jang24d.html}
}

@inproceedings{park2025riemannian,
  title     = {Riemannian Optimization for {LoRA} on the Stiefel Manifold},
  author    = {Park, JuneYoung and Kang, Minjae and Lee, Seongbae and
               Lee, Haegang and Kim, Seongwan and Lee, Jaeho},
  booktitle = {Findings of the Association for Computational Linguistics:
               EMNLP 2025},
  pages     = {20971--20985},
  year      = {2025},
  address   = {Suzhou, China},
  publisher = {Association for Computational Linguistics},
  doi       = {10.18653/v1/2025.findings-emnlp.1143},
  url       = {https://aclanthology.org/2025.findings-emnlp.1143/}
}

@article{arjevani2023lower,
  author = {Yossi Arjevani and Yair Carmon and John C. Duchi and Dylan J. Foster and Nathan Srebro and Blake Woodworth},
  title = {Lower Bounds for Non-Convex Stochastic Optimization},
  journal = {Mathematical Programming},
  volume = {199},
  number = {1--2},
  pages = {165--214},
  year = {2023},
  doi = {10.1007/s10107-022-01822-7},
  url = {https://doi.org/10.1007/s10107-022-01822-7}
}

@misc{jin2026bounded,
  author = {Jikai Jin},
  title = {A Tight Lower Bound for Smooth Nonconvex Stochastic Optimization with Bounded Gradient Noise},
  year = {2026},
  howpublished = {arXiv:2608.09004},
  url = {https://arxiv.org/abs/2608.09004}
}

@inproceedings{yu2023climsim,
  title     = {ClimSim: A Large Multi-Scale Dataset for Hybrid Physics--ML
               Climate Emulation},
  author    = {Yu, Sungduk and Hannah, Walter and Peng, Liran and Lin, Jerry
               and Bhouri, Mohamed Aziz and Gupta, Ritwik and L{\"u}tjens,
               Bj{\"o}rn and Will, Justus C. and Behrens, Gunnar and Busecke,
               Julius J. M. and others},
  booktitle = {Advances in Neural Information Processing Systems},
  volume    = {36},
  year      = {2023},
  doi       = {10.52202/075280-0968},
  url       = {https://papers.neurips.cc/paper_files/paper/2023/hash/45fbcc01349292f5e059a0b8b02c8c3f-Abstract-Datasets_and_Benchmarks.html}
}

@inproceedings{stackhouse2021prediction,
  title={The prediction of worldwide energy resources (POWER) project},
  author={Stackhouse Jr, Paul W and Macpherson, Bradley and Broddle, Madison and McNeil, Chequel and Barnett, Jason and Mikovitz, Colleen and Zhang, Taiping},
  booktitle={United Nations Climate Change Conference, Community of Practice 26},
  year={2021}
}

@misc{kaggle2011givemecredit,
  author       = {{Kaggle}},
  title        = {{Give Me Some Credit}},
  year         = {2011},
  howpublished = {Kaggle competition dataset},
  url          = {https://www.kaggle.com/competitions/GiveMeSomeCredit},
  note         = {Accessed September 25, 2026}
}

\appendix
\numberwithin{table}{section}
\section{A High-Probability Inner-Solver Bound}
\label{app:inner}

We first give a bound that applies to both penalty choices. All probability
statements can be conditioned on the history before an inner run. This is
important because the next proximal objective depends on the preceding
stochastic iterates.

\begin{lemma}[Strongly convex stochastic subgradient subroutine]
\label{lem:inner}
Let $H$ be $m$-strongly convex on a neighborhood of $\B$, where $m>0$.
Suppose $\widetilde h_s$ is a stochastic subgradient at $z_s\in\B$ with
$\E[\widetilde h_s\mid\mathcal F_s]\in\partial H(z_s)$ and
$\norm{\widetilde h_s}\leq G$ almost surely, for some $G>0$.
Starting at any $z_1\in\B$, set
\[
    z_{s+1}=\Pset_{\B}(z_s-\eta_s\widetilde h_s),
    \qquad \eta_s=\frac{2}{m(s+1)},\qquad
    \bar z=\frac1K\sum_{s=1}^Kz_s.
\]
For $0<\delta<1$, with probability at least $1-\delta$,
\begin{equation}
    H(\bar z)-\min_{z\in\B}H(z)
    \leq U_K(m,G,\delta)
    :=\frac{m+(G^2/m)\{\log(K+1)+2\log(1/\delta)\}}{K}.
    \label{eq:inner-bound}
\end{equation}
\end{lemma}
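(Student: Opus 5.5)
The plan is the standard last-iterate-free analysis of projected stochastic subgradient descent for strongly convex functions, with step $\eta_s=2/(m(s+1))$ and uniform averaging. The deterministic part of the argument produces the $\log(K+1)$ term. A martingale concentration argument, whose variance is absorbed by the strong-convexity drift, produces the $\log(1/\delta)$ term.

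\emph{Step 1 (one-step recursion).} Let $z^\star=\argmin_{z\in\B}H(z)$, $D_s:=\norm{z_s-z^\star}^2\le4$, $h_s:=\E[\widetilde h_s\mid\mathcal F_s]\in\partial H(z_s)$, and $\xi_s:=\ip{h_s-\widetilde h_s}{z_s-z^\star}$.
\begin{itemize}
\item Nonexpansiveness of $\Pset_{\B}$ and $\norm{\widetilde h_s}\le G$ give $D_{s+1}\le D_s-2\eta_s\ip{\widetilde h_s}{z_s-z^\star}+\eta_s^2G^2$.
\item Strong convexity gives $H(z_s)-H(z^\star)\le\ip{h_s}{z_s-z^\star}-\frac m2D_s$.
\end{itemize}
Combining the two,
\[
H(z_s)-H(z^\star)\le\Bigl(\tfrac{1}{2\eta_s}-\tfrac m2\Bigr)D_s-\tfrac{1}{2\eta_s}D_{s+1}+\tfrac{\eta_s G^2}{2}+\xi_s .
\]
Since $\frac1{2\eta_s}=\frac{m(s+1)}4$, the coefficient of $D_s$ is $\frac{m(s-1)}4$. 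The coefficient of $D_{s+1}$ is $-\frac{m(s+1)}4=-\frac{ms}{4}-\frac m4$. Summing over $s=1,\dots,K$, the sum telescopes with no leftover from $D_1$, because its coefficient is $0$, and leaves an extra negative drift $-\frac m4\sum_{s=1}^K D_{s+1}$. Using $\sum_s\eta_sG^2/2=\frac{G^2}{m}\sum_s\frac1{s+1}\le\frac{G^2}{m}\log(K+1)$, I obtain
\[
\sum_{s=1}^K\bigl(H(z_s)-H(z^\star)\bigr)\le\frac{G^2}{m}\log(K+1)+\sum_{s=1}^K\xi_s-\frac m4\sum_{s=1}^KD_{s+1}.
\]

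\emph{Step 2 (concentration, the main obstacle).} The increments $\xi_s$ form a martingale difference sequence. Conditionally on $\mathcal F_s$, $\xi_s$ lies in an interval of length at most $4G\sqrt{D_s}$, because both $h_s$ and $\widetilde h_s$ have norm at most $G$. Hoeffding's lemma then gives
\[
\E[e^{\lambda\xi_s}\mid\mathcal F_s]\le e^{2\lambda^2G^2D_s},
\]
so $\exp\bigl(\lambda\sum_{s\le k}\xi_s-2\lambda^2G^2\sum_{s\le k}D_s\bigr)$ is a supermartingale. Markov's inequality then yields, with probability at least $1-\delta$,
\[
\sum_s\xi_s\le2\lambda G^2\sum_sD_s+\frac{\log(1/\delta)}{\lambda}.
\]
I choose $\lambda$ so that $2\lambda G^2\le m/4$, i.e.\ $\lambda=\Theta(m/G^2)$. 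The variance term $\frac m4\sum_{s=1}^KD_s$ is then cancelled by the drift $-\frac m4\sum_s D_{s+1}$ up to the single boundary term $\frac m4D_1\le m$. This is exactly the source of the $m$ in the numerator of $U_K$. The remaining term is $O\bigl((G^2/m)\log(1/\delta)\bigr)$.

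Matching the precise constant $2$ in front of $\log(1/\delta)$ is where care is needed. The crude bound above gives $8G^2\log(1/\delta)/m$, so this step is where I expect the main obstacle. My first attempt would tighten the range bound: $\xi_s$ is centered, and its conditional range is $\ip{\cdot}{z_s-z^\star}$ over a ball of radius $G$ around a mean of norm at most $G$, so the sub-Gaussian parameter can be taken as $G^2D_s$ rather than $4G^2D_s$. I would then split the drift $\frac m4\sum D_{s+1}$ optimally between the variance absorption and the boundary term. If the constant still does not match, the argument gives the stated bound up to a numerical factor, which is harmless for all subsequent uses, since only the order $\widetilde O(1/K)$ enters.

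\emph{Step 3 (averaging).} By convexity of $H$ and Jensen's inequality, $H(\bar z)-H(z^\star)\le\frac1K\sum_{s=1}^K(H(z_s)-H(z^\star))$. Dividing the bound from Steps 1--2 by $K$ gives \eqref{eq:inner-bound} on the event of probability at least $1-\delta$. All expectations are taken conditionally on $\mathcal F_1$, so the bound holds conditionally on the history before the inner run, as the surrounding text requires.
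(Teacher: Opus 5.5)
Your proposal is correct and follows the paper's proof essentially step for step: the same one-step recursion and telescoping, the same Hoeffding-lemma exponential supermartingale whose variance term is cancelled by the strong-convexity drift up to $\frac m4\norm{z_1-z^\star}^2\le m$, and Jensen for the average. The tightening you suggest is exactly the paper's and already gives the stated constant, with no further splitting of the drift: since $h_s$ is $\mathcal F_s$-measurable, $\xi_s$ lies in an interval of length $2G\sqrt{D_s}$, so $\E[e^{\lambda\xi_s}\mid\mathcal F_s]\le e^{\lambda^2G^2D_s/2}$, and $\lambda=m/(2G^2)$ cancels $\frac m4\sum_sD_s$ while leaving precisely $2(G^2/m)\log(1/\delta)$.
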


\begin{proof}
Let $u$ minimize $H$ over $\B$, put $D_s=\norm{z_s-u}$, and write
$h_s=\E[\widetilde h_s\mid\mathcal F_s]$ and
$X_s=\ip{h_s-\widetilde h_s}{z_s-u}$.
Nonexpansiveness of the ball projection and strong convexity imply
\begin{align*}
    H(z_s)-H(u)
    &\leq \ip{h_s}{z_s-u}-\frac m2D_s^2\\
    &\leq\frac{D_s^2-D_{s+1}^2}{2\eta_s}
            -\frac m2D_s^2+\frac{\eta_sG^2}{2}+X_s.
\end{align*}
Summing with the stated stepsize and dropping the negative terminal term gives
\begin{equation}
    \sum_{s=1}^K(H(z_s)-H(u))
    \leq\frac m4D_1^2-\frac m4\sum_{s=1}^KD_s^2
       +\frac{G^2}{m}\sum_{s=1}^K\frac1{s+1}+\sum_{s=1}^KX_s.
    \label{eq:inner-telescope}
\end{equation}
Conditionally on $\mathcal F_s$, $X_s$ is centered and its range has length
at most $2GD_s$. The conditional bounded-variable exponential inequality gives
\[
    \E[\exp(\theta X_s)\mid\mathcal F_s]
       \leq\exp(\theta^2G^2D_s^2/2).
\]
Iterating conditional expectations and applying Markov's inequality shows
that, with probability at least $1-\delta$,
\[
    \sum_{s=1}^KX_s
       \leq \frac{\theta G^2}{2}\sum_{s=1}^KD_s^2
                   +\frac{\log(1/\delta)}{\theta}.
\]
Set $\theta=m/(2G^2)$. The squared-distance sum cancels the negative sum
in \eqref{eq:inner-telescope}. Since $D_1\leq2$ and
$\sum_{s=1}^K(s+1)^{-1}\leq\log(K+1)$, division by $K$ and convexity of
$H$ prove \eqref{eq:inner-bound}.
\end{proof}

There is no differentiability requirement on $H$. In particular, neither
the small softplus smoothing parameter nor the kink of the hinge changes
this subgradient bound. The function error is
$\widetilde O((m+G^2/m)/K)$, rather than a smooth-gradient bound involving
the penalty's smoothness constant.

\section{Proofs for Convex Constraints}
\label{app:convex}

Throughout this section Assumption~\ref{ass:convex} holds. Define
\begin{equation}
    G_F:=G_f+4\ell,\qquad
    G_H:=G_F+\lambda_0G_g,\qquad
    e_K(\delta):=U_K(\ell,G_H,\delta)+\frac{\log2}{K}.
    \label{eq:convex-error-constants}
\end{equation}
Thus $\lambda_0\sigma-G_F=G_F>0$ under
\eqref{eq:convex-settings}. This positive margin is an explicit consequence
of the parameter choice, not an additional assumption.

\subsection{Constraint Error Bound and Exact Penalization}

\begin{lemma}[Error bound induced by the convex defining function]
\label{lem:convex-eb}
For every $x\in\B$,
\begin{equation}
    \dist(x,\C)\leq\frac{[g(x)]_+}{\sigma}.
    \label{eq:convex-error-bound}
\end{equation}
\end{lemma}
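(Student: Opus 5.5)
The plan is to use convexity of $g$ to turn the boundary slope into a linear lower bound on $g$ along a segment from a feasible point. Fix $x\in\B$. If $g(x)\leq0$, then $x\in\C$ and both sides of \eqref{eq:convex-error-bound} are nonnegative with the left side zero, so there is nothing to prove. Assume $g(x)>0$ and let $p\in\Pset_{\C}(x)$. Since $\C$ is convex, closed and nonempty, this projection is unique. Because $\C\subset\B$ and $\B$ is convex, the whole segment $[p,x]$ lies in $\B$. By continuity of $g$, $p$ lies on the boundary of $\C$ relative to the segment, so $g(p)=0$: if $g(p)<0$, points on the segment slightly past $p$ toward $x$ would be feasible and strictly closer to $x$.

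Given $g(p)=0$ and $p\in\B$, the boundary condition gives $\norm{\nabla g(p)}\geq\sigma$. The main step is to show that $\nabla g(p)$ points along $x-p$. Projection optimality says $x-p\in N_{\C}(p)$. To identify this cone, use Slater's condition: because $\nabla g(p)\neq0$ and $g$ is convex with $g(p)=0$, every point $z$ with $g(z)<0$ lies in the open half-space $\ip{\nabla g(p)}{z-p}<0$. Standard convex analysis then gives $N_{\C}(p)=\{t\nabla g(p):t\geq0\}$. Slater holds here, since the point $p-s\nabla g(p)$ has negative $g$-value for small $s>0$.

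An alternative that avoids normal-cone calculus is a direct contradiction. Decompose $x-p=a\nabla g(p)/\norm{\nabla g(p)}+w$ with $w\perp\nabla g(p)$. If $w\neq0$, moving from $p$ in the direction $w$ minus a small multiple of $\nabla g(p)$ keeps $g<0$ to first order and strictly decreases the distance to $x$. This contradicts the choice of $p$, so $w=0$. Either way, $x-p=t\nabla g(p)$ with $t\geq0$, and $t>0$ because $x\ne p$.

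For the conclusion, convexity of $g$ gives
\[
g(x)\geq g(p)+\ip{\nabla g(p)}{x-p}=t\norm{\nabla g(p)}^2=\norm{\nabla g(p)}\,\norm{x-p}\geq\sigma\,\dist(x,\C),
\]
which is \eqref{eq:convex-error-bound}.

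The main obstacle I expect is justifying $N_{\C}(p)=\operatorname{cone}\{\nabla g(p)\}$ rigorously. I would handle it with the first-order perturbation argument above: the point $p+s(w-c\nabla g(p))$ stays in $\C$ for small $s$ because $g(p)=0$ and its directional derivative is $-c\norm{\nabla g(p)}^2<0$. This uses only differentiability of $g$, which holds on the open neighborhood. The squared distance to $x$ then changes by $-2s\norm{w}^2+2sc\,a\norm{\nabla g(p)}+O(s^2)$, which is negative for small $c$ and $s$.
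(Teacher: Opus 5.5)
Your proof is correct and follows essentially the same route as the paper: project $x$ onto $\C$, note that $g(p)=0$, identify $N_{\C}(p)$ as the ray generated by $\nabla g(p)$, and conclude with the convexity inequality $g(x)\geq\ip{\nabla g(p)}{x-p}=\norm{\nabla g(p)}\norm{x-p}$. You justify $g(p)=0$ and the normal-cone formula in more detail than the paper does; in your direct perturbation route, add one line showing $a>0$ (for instance, $a<0$ would make $x-p$ a descent direction of $g$ at $p$, contradicting that $p$ is the nearest point).
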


\begin{proof}
The claim is immediate for $x\in\C$. Otherwise let $p=\Pset_{\C}(x)$.
Compactness of $\C$ gives existence, and convexity gives uniqueness.
The point $p$ lies on $g=0$. Because $\nabla g(p)\ne0$, the normal cone
is $N_{\C}(p)=\{\alpha\nabla g(p):\alpha\geq0\}$.
The projection condition gives $x-p=\alpha\nabla g(p)$ for some
$\alpha>0$. Convexity then yields
\[
    g(x)\geq g(p)+\ip{\nabla g(p)}{x-p}
       =\norm{\nabla g(p)}\norm{x-p}
       \geq\sigma\dist(x,\C).
\]
\end{proof}

\begin{lemma}[Inner accuracy, infeasibility, and proximal accuracy]
\label{lem:convex-inner}
Fix $x_t\in\B$, let $y_t=Q_\ell(x_t)$, and run the inner routine with the
convex settings. With conditional probability at least $1-\delta$,
\begin{align}
    F_t(x_{t+1})+\lambda_0[g(x_{t+1})]_+
       &\leq F_t(y_t)+e_K(\delta),\label{eq:cv-inner-gap}\\
    \dist(x_{t+1},\C)&\leq e_K(\delta)/G_F,\label{eq:cv-inner-dist}\\
    \norm{x_{t+1}-y_t}^2&\leq2e_K(\delta)/\ell.\label{eq:cv-prox-error}
\end{align}
\end{lemma}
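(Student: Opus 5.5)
We prove the three bounds in order, each building on the previous one.

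\textbf{Step 1: the inner gap \eqref{eq:cv-inner-gap}.} Apply Lemma~\ref{lem:inner} to $H_t=F_t+p_\gamma\circ g$ with $m=\ell$.
\begin{itemize}
\item The stochastic subgradient $h_s=\widetilde v(z_s;\xi_s)+2\ell(z_s-x_t)+\nabla(p_\gamma\circ g)(z_s)$ has norm at most $G_f+4\ell+\lambda_0G_g=G_H$ on $\B$. This uses $\norm{z_s-x_t}\leq2$ and $0\leq p_\gamma'\leq\lambda_0$.
\item So with conditional probability at least $1-\delta$ we get $H_t(x_{t+1})\leq\min_{\B}H_t+U_K(\ell,G_H,\delta)$.
\item Next compare softplus with the hinge: $\lambda_0[u]_+\leq p_\gamma(u)\leq\lambda_0[u]_++\gamma\log2$. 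With $\gamma=1/K$, the left side of \eqref{eq:cv-inner-gap} is at most $H_t(x_{t+1})$.
\item Since $y_t\in\C\subset\B$ and $g(y_t)\leq0$, we have $\min_\B H_t\leq H_t(y_t)\leq F_t(y_t)+\log2/K$.
\end{itemize}
Combining these gives \eqref{eq:cv-inner-gap} with $e=e_K(\delta)$.

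\textbf{Step 2: the infeasibility bound \eqref{eq:cv-inner-dist}.}
\begin{itemize}
\item Let $q=\Pset_\C(x_{t+1})$ and $d=\norm{x_{t+1}-q}$.
\item $F_t$ is $G_F$-Lipschitz on $\B$, because $\norm{\nabla(\ell\norm{\cdot-x_t}^2)}\leq4\ell$ there. Hence $F_t(q)\leq F_t(x_{t+1})+G_Fd$.
\item Since $y_t$ minimizes $F_t$ over $\C$, $F_t(y_t)\leq F_t(q)$.
\item Lemma~\ref{lem:convex-eb} gives $\lambda_0[g(x_{t+1})]_+\geq\lambda_0\sigma d=2G_Fd$.
\end{itemize}
Inserting these into \eqref{eq:cv-inner-gap} yields
\[
F_t(x_{t+1})+2G_Fd\leq F_t(x_{t+1})+G_Fd+e,
\]
that is, $d\leq e/G_F$.

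\textbf{Step 3: the proximity bound \eqref{eq:cv-prox-error}, which is the delicate step.} The difficulty is that $x_{t+1}$ may be infeasible, so strong convexity of $F_t$ over $\C$ cannot be used directly. Instead, use the exact-penalty function $E(z):=F_t(z)+\lambda_0[g(z)]_+$ on $\B$.
\begin{itemize}
\item $E$ is $\ell$-strongly convex on $\B$.
\item By the same Lipschitz-plus-error-bound argument as in Step 2, $E(z)\geq F_t(\Pset_\C(z))+G_F\dist(z,\C)\geq F_t(y_t)$ for all $z\in\B$, and $E(y_t)=F_t(y_t)$. So $y_t$ minimizes $E$ over $\B$.
\item Strong convexity then gives $\frac\ell2\norm{x_{t+1}-y_t}^2\leq E(x_{t+1})-E(y_t)\leq e$ by \eqref{eq:cv-inner-gap}. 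This is exactly \eqref{eq:cv-prox-error}.
\end{itemize}
The remaining point to verify is the minimizer claim for $E$ over $\B$ rather than over $\C$; it holds because the margin $\lambda_0\sigma-G_F=G_F$ is strictly positive. All three bounds hold on the single event from Lemma~\ref{lem:inner}, so the conditional probability is at least $1-\delta$.
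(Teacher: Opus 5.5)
Your proposal is correct and follows essentially the same route as the paper: Lemma~\ref{lem:inner} with the softplus--hinge sandwich gives the gap, the inequality $F_t(z)+\lambda_0[g(z)]_+\geq F_t(y_t)+G_F\dist(z,\C)$ (Lipschitz continuity plus the error bound, with margin $\lambda_0\sigma-G_F=G_F$) gives infeasibility, and quadratic growth of the $\ell$-strongly convex exact-penalty function at its minimizer $y_t$ over $\B$ gives proximity. The only difference is cosmetic: you first derive the distance bound at $x_{t+1}$ and then the general inequality, whereas the paper proves the general inequality once and reads off both \eqref{eq:cv-inner-dist} and the minimizer claim from it.
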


\begin{proof}
The function $F_t$ is $\ell$-strongly convex. Since $p_\gamma$ is convex
and nondecreasing and $g$ is convex,
$H_t:=F_t+p_\gamma\circ g$ is also $\ell$-strongly convex.
The stochastic subgradient used in Algorithm~\ref{alg:one-projection}
has norm at most
\[
    G_f+2\ell\norm{z_s-x_t}+\lambda_0G_g
       \leq G_f+4\ell+\lambda_0G_g=G_H.
\]
Apply Lemma~\ref{lem:inner} and compare the minimum of $H_t$ to $H_t(y_t)$.
The elementary bounds
\[
    \lambda_0[u]_+\leq p_\gamma(u)
       \leq\lambda_0[u]_++\gamma\log2
\]
and $\gamma=1/K$ prove \eqref{eq:cv-inner-gap}.

For completeness, the exact-penalty objective
$P_t(z):=F_t(z)+\lambda_0[g(z)]_+$ has the same unique minimizer $y_t$
over $\B$ as $F_t$ does over $\C$. Indeed, for any $z\in\B$, set
$p=\Pset_{\C}(z)$ and $d=\norm{z-p}$. The function $F_t$ is $G_F$-Lipschitz
on $\B$, so Lemma~\ref{lem:convex-eb} gives
\begin{align}
    P_t(z)&\geq F_t(p)-G_Fd+\lambda_0\sigma d\nonumber\\
          &\geq F_t(y_t)+G_Fd.\label{eq:cv-exact-penalty}
\end{align}
This proves exactness and, together with \eqref{eq:cv-inner-gap},
\eqref{eq:cv-inner-dist}. Finally, $P_t$ is $\ell$-strongly convex.
At its minimizer over the convex set $\B$, quadratic growth gives
\[
    \frac\ell2\norm{x_{t+1}-y_t}^2
       \leq P_t(x_{t+1})-P_t(y_t)\leq e_K(\delta),
\]
proving \eqref{eq:cv-prox-error}.
\end{proof}

\subsection{Descent and Stationarity Transfer}

All projections $p_t=\Pset_{\C}(x_t)$ and proximal points below are proof
objects; only the selected terminal projection is computed.
Write $d_t=\norm{x_t-p_t}$ and $s_t=\norm{x_{t+1}-x_t}$.

\begin{lemma}[Outer descent with infeasible iterates]
\label{lem:cv-descent}
Suppose the conclusions of Lemma~\ref{lem:convex-inner} hold in every
outer iteration with a common error bound $e$. Then
\begin{equation}
    \frac1T\sum_{t=0}^{T-1}s_t^2
    \leq\frac{\Delta_f}{\ell T}
       +\frac{1+G_f/G_F}{\ell}e+\frac{e^2}{G_F^2}.
    \label{eq:cv-average-steps}
\end{equation}
\end{lemma}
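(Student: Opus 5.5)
The plan is to derive a one-step inequality of the form \eqref{eq:overview-descent} and then sum it over $t=0,\ldots,T-1$. The sum telescopes in $f$. The infeasibility terms $d_t$ are controlled by \eqref{eq:cv-inner-dist} for $t\geq1$ and vanish at $t=0$ because $x_0\in\C$.

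\emph{One-step inequality.} Fix $t$. Since $F_t(x_{t+1})=f(x_{t+1})+\ell s_t^2$ and $\lambda_0[g(x_{t+1})]_+\geq0$, inequality \eqref{eq:cv-inner-gap} gives
\[
    f(x_{t+1})+\ell s_t^2\leq F_t(y_t)+e.
\]
Now bring in the analytical comparison point $p_t=\Pset_{\C}(x_t)$. It exists and is unique because $\C$ is closed and convex; under Assumption~\ref{ass:convex} it is also contained in the compact ball $\B$. Since $y_t=Q_\ell(x_t)$ minimizes $F_t$ over $\C$,
\[
    F_t(y_t)\leq F_t(p_t)=f(p_t)+\ell d_t^2 .
\]
Because $x_t,p_t\in\B$ and $f$ is $G_f$-Lipschitz on a convex neighborhood of $\B$, we have $f(p_t)\leq f(x_t)+G_fd_t$. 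Combining these three bounds yields
\[
    \ell s_t^2\leq f(x_t)-f(x_{t+1})+G_fd_t+\ell d_t^2+e .
\]

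\emph{Summation.} Summing over $t=0,\ldots,T-1$ telescopes the $f$-differences to $f(x_0)-f(x_T)$. This is at most $\Delta_f$, since $x_T\in\B$: the inner routine returns an average of ball iterates, and $\B$ is convex. For the infeasibility terms, $d_0=0$ because $x_0\in\C$. For $t\geq1$, \eqref{eq:cv-inner-dist} applied at iteration $t-1$ gives $d_t\leq e/G_F$. Hence
\[
    \sum_{t=0}^{T-1}\bigl(G_fd_t+\ell d_t^2\bigr)
    \leq T\Bigl(\frac{G_f}{G_F}e+\frac{\ell e^2}{G_F^2}\Bigr).
\]
Dividing the summed inequality by $\ell T$ gives
\[
    \frac1T\sum_{t=0}^{T-1}s_t^2
    \leq\frac{\Delta_f}{\ell T}+\frac{e}{\ell}+\frac{G_f e}{\ell G_F}+\frac{e^2}{G_F^2},
\]
which is exactly \eqref{eq:cv-average-steps}.

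\emph{Main obstacle.} The only delicate point is the comparison step: $y_t$ is the constrained proximal point, while the iterate $x_t$ itself may be infeasible. The argument therefore has to pass through the feasible proxy $p_t$, paying a Lipschitz cost $G_fd_t$ and a quadratic cost $\ell d_t^2$. Both costs are then small by the infeasibility bound from Lemma~\ref{lem:convex-inner}, and that bound is available without computing any projection.
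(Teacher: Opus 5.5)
Your proposal is correct and follows the paper's proof essentially line for line. Both arguments drop the nonnegative penalty in \eqref{eq:cv-inner-gap}, compare $F_t(y_t)$ with $F_t(p_t)$ at the feasible proxy $p_t=\Pset_{\C}(x_t)$, and pay $G_fd_t+\ell d_t^2$ by Lipschitz continuity; both then telescope using $d_0=0$, $d_t\leq e/G_F$ for $t\geq1$, and $f(x_T)\geq\min_{\B}f$.
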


\begin{proof}
Dropping the nonnegative penalty in \eqref{eq:cv-inner-gap}, using
$F_t(y_t)\leq F_t(p_t)$, and using Lipschitz continuity of $f$ give
\begin{align*}
    f(x_{t+1})+\ell s_t^2
    &\leq F_t(y_t)+e\\
    &\leq f(p_t)+\ell d_t^2+e\\
    &\leq f(x_t)+G_f d_t+\ell d_t^2+e.
\end{align*}
This proves \eqref{eq:overview-descent}, including its quadratic distance
term. Now sum over $t$, use $d_0=0$ and $d_t\leq e/G_F$ for $t\geq1$,
and use $f(x_T)\geq\min_{\B}f$.
\end{proof}

\begin{lemma}[Properties of the convex constrained proximal map]
\label{lem:cv-prox-lipschitz}
The point $Q_\ell(x)$ is unique and $Q_\ell$ is $2$-Lipschitz. Moreover,
$\Phi_\ell$ is differentiable with
$\nabla\Phi_\ell(x)=2\ell(x-Q_\ell(x))$, and
\eqref{eq:moreau-intuition} holds.
\end{lemma}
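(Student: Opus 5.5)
The plan is to treat $y\mapsto f(y)+\ell\norm{y-x}^2+\ind_{\C}(y)$ as a strongly convex problem and read off all four claims from its optimality condition.

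\textbf{Uniqueness.} Since $f$ is $\ell$-weakly convex on a convex neighborhood of $\B\supseteq\C$, the function $f(y)+\ell\norm{y-x}^2$ is $\ell$-strongly convex on $\C$. Indeed, $f+\frac{\ell}{2}\norm{\cdot}^2$ is convex, and the remaining $\frac\ell2\norm{y}^2$ supplies the strong convexity; the linear terms do not matter. The set $\C$ is convex, closed and nonempty, and the function is continuous. Hence a minimizer exists and is unique, which gives uniqueness of $Q_\ell(x)$.

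\textbf{Lipschitz bound.} Fix $x,x'$ and set $y=Q_\ell(x)$, $y'=Q_\ell(x')$. The optimality conditions are
\[
    0\in\partial f(y)+2\ell(y-x)+N_{\C}(y),
\]
and the same with primes. Because $f+\ind_{\C}+\frac\ell2\norm{\cdot}^2$ is convex, its subdifferential $\partial f+N_{\C}+\ell\,\mathrm{Id}$ is monotone; for weakly convex $f$ the limiting subdifferential coincides with the convex one after adding the quadratic. Write $u=2\ell(x-y)-\ell y\in\partial(f+\ind_{\C}+\frac\ell2\norm{\cdot}^2)(y)$, and define $u'$ analogously. Monotonicity then gives
\[
    \ip{2\ell(x-x')-\ell(y-y')}{y-y'}\geq0,
\]
that is, $\ell\norm{y-y'}^2\leq2\ell\ip{x-x'}{y-y'}$. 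Cauchy--Schwarz yields $\norm{y-y'}\leq2\norm{x-x'}$.

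\textbf{Differentiability.} This is the step needing most care. I would use the standard sandwich argument. For any $x,x'$, comparing with the feasible point $y=Q_\ell(x)$ gives
\[
    \Phi_\ell(x')\leq f(y)+\ell\norm{y-x'}^2
    =\Phi_\ell(x)+2\ell\ip{x-y}{x'-x}+\ell\norm{x'-x}^2 .
\]
Symmetrically, with $y'=Q_\ell(x')$,
\[
    \Phi_\ell(x)\leq\Phi_\ell(x')+2\ell\ip{x'-y'}{x-x'}+\ell\norm{x-x'}^2 .
\]
Combining the two and using the Lipschitz bound on $Q_\ell$, so that $\norm{(x'-y')-(x-y)}\leq3\norm{x'-x}$, gives
\[
    \Phi_\ell(x')=\Phi_\ell(x)+\ip{2\ell(x-y)}{x'-x}+O(\norm{x'-x}^2).
\]
Hence $\nabla\Phi_\ell(x)=2\ell(x-Q_\ell(x))$, which shows $\mathcal S_\ell(x)=2\ell\norm{x-Q_\ell(x)}$.

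\textbf{Stationarity bound.} Finally, \eqref{eq:moreau-intuition} follows directly. Its first identity is the gradient formula just proved. For the second, the optimality condition supplies $v\in\partial f(y)$ and $n\in N_{\C}(y)$ with $v+n=2\ell(x-y)$, whose norm is $\mathcal S_\ell(x)$. Therefore $\dist(0,\partial f(y)+N_{\C}(y))\leq\mathcal S_\ell(x)$.

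The main subtlety is justifying the optimality condition and monotonicity with the limiting subdifferential. I would handle it by noting that for weakly convex $f$, $\partial f(y)=\partial(f+\frac\ell2\norm{\cdot}^2)(y)-\ell y$ in the convex sense. In addition, the sum rule with $\ind_{\C}$ holds because $f$ is Lipschitz near $\C$.
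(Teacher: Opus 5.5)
Your proposal is correct and follows essentially the same route as the paper. Strong convexity of $f+\ell\norm{\cdot-x}^2$ on $\C$ gives existence and uniqueness, monotonicity of the subdifferential of $f+\ind_{\C}+\frac\ell2\norm{\cdot}^2$ gives $\norm{y-y'}^2\le2\ip{x-x'}{y-y'}$, and the proximal optimality condition with the sum rule gives \eqref{eq:moreau-intuition}. Your two-sided sandwich bound is an explicit, self-contained version of the paper's one-line step ``differentiating the minimum with its unique continuous minimizer,'' and it additionally yields an $O(\norm{x'-x}^2)$ remainder.

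One small slip: the shifted subgradient is $u=2\ell(x-y)+\ell y$, not $2\ell(x-y)-\ell y$. The monotonicity inequality you then display is the one the correct sign produces, so the conclusion is unaffected.
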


\begin{proof}
The function $f+\ind_{\C}$ is proper, lower semicontinuous and
$\ell$-weakly convex. Compactness gives existence of the proximal minimizer;
strong convexity gives uniqueness. For $q_i=Q_\ell(x_i)$, the optimality
conditions and monotonicity of the subdifferential of
$f+\ind_{\C}+\ell\norm{\cdot}^2/2$ give
\[
    \ip{2\ell(x_1-q_1)-2\ell(x_2-q_2)}{q_1-q_2}
        \geq-\ell\norm{q_1-q_2}^2.
\]
Consequently,
$\norm{q_1-q_2}^2\leq2\ip{x_1-x_2}{q_1-q_2}$, which proves the
Lipschitz bound. Differentiating the minimum with its unique continuous
minimizer yields the stated envelope gradient.
The proximal optimality condition is
\[
    2\ell(x-Q_\ell(x))\in
         \partial f(Q_\ell(x))+N_{\C}(Q_\ell(x)).
\]
The sum rule is valid since $f$ is locally Lipschitz and weakly convex on
a neighborhood of $\C$; equivalently it is the convex sum rule after
adding $\ell\norm{\cdot}^2/2$. This proves
\eqref{eq:moreau-intuition}.
\end{proof}

\begin{lemma}[The final projection preserves a stationarity certificate]
\label{lem:cv-transfer}
Under the event in Lemma~\ref{lem:cv-descent},
\begin{align}
    \frac1T\sum_{t=0}^{T-1}\mathcal S_\ell(p_{t+1})^2
    &\leq\frac{48\ell\Delta_f}{T}
      +(72+48G_f/G_F)\ell e
      +\frac{156\ell^2e^2}{G_F^2},\label{eq:cv-nonsmooth-transfer}\\
    \frac1T\sum_{t=0}^{T-1}\mathcal G_\ell(p_{t+1})^2
    &\leq\frac{12\ell\Delta_f}{T}
      +(162+12G_f/G_F)\ell e
      +\frac{39\ell^2e^2}{G_F^2},\label{eq:cv-smooth-transfer}
\end{align}
where the second statement additionally assumes $f$ is $\ell$-smooth.
\end{lemma}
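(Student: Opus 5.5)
Both bounds reduce to the average squared outer step of Lemma~\ref{lem:cv-descent}, plus the per-iteration estimates of Lemma~\ref{lem:convex-inner}. Fix $t$ and write $p_{t+1}=\Pset_{\C}(x_{t+1})$, $y_t=Q_\ell(x_t)$, and $d_{t+1}=\norm{x_{t+1}-p_{t+1}}\leq e/G_F$.

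\emph{Nonsmooth bound \eqref{eq:cv-nonsmooth-transfer}.} By Lemma~\ref{lem:cv-prox-lipschitz}, $\mathcal S_\ell(p_{t+1})=2\ell\norm{p_{t+1}-Q_\ell(p_{t+1})}$. Insert the chain $p_{t+1}\to x_{t+1}\to x_t\to y_t\to Q_\ell(p_{t+1})$ and use the triangle inequality:
\[
\norm{p_{t+1}-Q_\ell(p_{t+1})}\leq d_{t+1}+s_t+\norm{x_t-y_t}+\norm{Q_\ell(x_t)-Q_\ell(p_{t+1})}.
\]
Since $Q_\ell$ is $2$-Lipschitz, the last term is at most $2(s_t+d_{t+1})$. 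For the middle term, use $\norm{x_t-y_t}\leq s_t+\norm{x_{t+1}-y_t}\leq s_t+\sqrt{2e/\ell}$ by \eqref{eq:cv-prox-error}.

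This leaves the bound $4s_t+3d_{t+1}+\sqrt{2e/\ell}$. Square it with a weighted Cauchy--Schwarz, multiply by $4\ell^2$, average over $t$, and substitute \eqref{eq:cv-average-steps} for $\frac1T\sum s_t^2$ together with $d_{t+1}^2\leq e^2/G_F^2$. The coefficient on $s_t^2$ then produces the $48\ell\Delta_f/T$ term: the weights must give $4\ell^2\cdot c\cdot\frac{1}{\ell}=48\ell$, i.e.\ $c=12$. The remaining $e$ and $e^2$ terms collect into the stated constants. I would arrange the split to be something like $(4s+3d+r)^2\leq 12 s^2+\dots$ and fix the constants only at the end; they are bookkeeping, not the substance.

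\emph{Smooth bound \eqref{eq:cv-smooth-transfer}.} Set $q_t:=\Pset_{\C}(x_t-\nabla f(x_t)/\ell)$. Because $f$ is $\ell$-smooth, $F_t$ has gradient $\nabla f+2\ell(\cdot-x_t)$, so the constrained minimizer $y_t$ satisfies $y_t=\Pset_{\C}\bigl(y_t-(\nabla f(y_t)+2\ell(y_t-x_t))/\ell\bigr)$. I will compare $\mathcal G_\ell(p_{t+1})/\ell=\norm{p_{t+1}-\Pset_{\C}(p_{t+1}-\nabla f(p_{t+1})/\ell)}$ with this fixed-point identity at $y_t$. The projection onto the convex set $\C$ is nonexpansive, and the map $z\mapsto z-\nabla f(z)/\ell$ is $2$-Lipschitz. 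This gives
\[
\norm{\Pset_{\C}(p_{t+1}-\nabla f(p_{t+1})/\ell)-y_t}\leq 2\norm{p_{t+1}-y_t}+2\norm{y_t-x_t},
\]
where the second term accounts for the $2\ell(y_t-x_t)/\ell$ shift. Therefore
\[
\mathcal G_\ell(p_{t+1})/\ell\leq 3\norm{p_{t+1}-y_t}+2\norm{y_t-x_t}.
\]
Bounding $\norm{p_{t+1}-y_t}\leq d_{t+1}+\sqrt{2e/\ell}$ and $\norm{y_t-x_t}\leq s_t+\sqrt{2e/\ell}$, squaring, and averaging as before gives \eqref{eq:cv-smooth-transfer}. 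Here the coefficient $12$ comes from $4\cdot\frac{1}{\ell}\cdot$(weight $3$) times $\ell^2$.

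\emph{Main obstacle.} The only nontrivial step is the smooth comparison: one must check that the fixed-point characterization of $y_t$ involves the shifted gradient, and absorb that shift cleanly without extra constants. The rest is triangle inequalities and matching constants, which I would verify last.
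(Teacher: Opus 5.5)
Your smooth-case argument is correct and is essentially the paper's. The fixed-point identity for $y_t$ plus nonexpansiveness gives $\mathcal G_\ell(p_{t+1})\le 3\ell\norm{p_{t+1}-y_t}+2\ell\norm{y_t-x_t}\le 2\ell s_t+5\ell E_t+3\ell d_{t+1}$, where $E_t:=\norm{x_{t+1}-y_t}$. Then $(a+b+c)^2\le 3(a^2+b^2+c^2)$ yields exactly the constants $12$, $162+12G_f/G_F$, and $39$.

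The nonsmooth case, however, does not prove the stated bound. Your chain routes $p_{t+1}\to y_t$ through $x_t$, so you pay $s_t$ twice. You end with $\mathcal S_\ell(p_{t+1})\le 2\ell(4s_t+3d_{t+1}+E_t)$, i.e.\ a coefficient $8\ell$ on $s_t$. The split you then propose, $(4s+3d+r)^2\le 12s^2+\cdots$, is false: take $d=r=0$ to get $16s^2\le 12s^2$. Any valid squaring puts a coefficient of at least $16$ on $s_t^2$. After multiplying by $4\ell^2$ and inserting \eqref{eq:cv-average-steps}, this gives at least $64\ell\Delta_f/T$, so the stated $48\ell\Delta_f/T$ is unreachable from your inequality, and the $e$ and $e^2$ constants also come out too large. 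This is not mere bookkeeping: the lemma asserts explicit constants, and the schedule \eqref{eq:cv-schedule} in Theorem~\ref{thm:convex-nonsmooth} is calibrated to them.

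The fix is the direct bound you already use in the smooth part, $\norm{p_{t+1}-y_t}\le d_{t+1}+E_t$. Only the Lipschitz leg should pass through $x_t$: $\norm{Q_\ell(x_t)-Q_\ell(p_{t+1})}\le 2(s_t+d_{t+1})$. Together these give $\mathcal S_\ell(p_{t+1})\le 4\ell s_t+2\ell E_t+6\ell d_{t+1}$. The unweighted $(a+b+c)^2\le 3(a^2+b^2+c^2)$, with $E_t^2\le 2e/\ell$ and $d_{t+1}\le e/G_F$, then produces exactly $48\ell\Delta_f/T+(72+48G_f/G_F)\ell e+156\ell^2e^2/G_F^2$.
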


\begin{proof}
Put $E_t=\norm{x_{t+1}-y_t}$, so $E_t^2\leq2e/\ell$.
For the nonsmooth measure, Lemma~\ref{lem:cv-prox-lipschitz} gives
\begin{align*}
    \mathcal S_\ell(p_{t+1})
    &\leq2\ell\norm{p_{t+1}-y_t}
           +4\ell\norm{x_t-p_{t+1}}\\
    &\leq4\ell s_t+2\ell E_t+6\ell d_{t+1}.
\end{align*}
Squaring with $(a+b+c)^2\leq3(a^2+b^2+c^2)$, averaging, and applying
\eqref{eq:cv-average-steps} proves \eqref{eq:cv-nonsmooth-transfer}.

For the smooth measure, nonexpansiveness of the convex projection and
$\ell$-Lipschitz continuity of $\nabla f$ show that
$\mathcal G_\ell$ is $3\ell$-Lipschitz on $\B$. Proximal optimality also gives
\[
    y_t=\Pset_{\C}\left(
       y_t-\frac{\nabla f(y_t)+2\ell(y_t-x_t)}{\ell}\right),
    \qquad
    \mathcal G_\ell(y_t)\leq2\ell\norm{y_t-x_t}.
\]
It follows that
\[
    \mathcal G_\ell(p_{t+1})
      \leq2\ell s_t+5\ell E_t+3\ell d_{t+1}.
\]
The same squared-sum bound proves \eqref{eq:cv-smooth-transfer}.
\end{proof}

\subsection{Shared Schedule and Proofs of the Convex-Constraint Theorems}
\label{app:convex-schedule}

Theorems~\ref{thm:convex-smooth} and~\ref{thm:convex-nonsmooth} use the
same schedule. Choose
\begin{equation}
\begin{split}
    T&:=\max\left\{2,\left\lceil
                       \frac{1000\ell\Delta_f}{\epsilon^2}
                          \right\rceil\right\},\qquad
    \delta_{\rm in}:=\frac1{6T},\\
    e_*&:=\min\left\{\frac{\epsilon^2}{4000\ell},
                         \frac{G_F\epsilon}{60\ell}\right\}.
\end{split}
\label{eq:cv-schedule}
\end{equation}
Take any $K\geq2$ such that $e_K(\delta_{\rm in})\leq e_*$. This is an
explicit scalar inequality involving only the specified constants.
For example, double $K$ from $2$ until it holds. There is such a choice with
\begin{equation}
    K=\widetilde O\left(
       1+\left(1+\ell+\frac{G_H^2}{\ell}\right)
         \max\left\{\frac{\ell}{\epsilon^2},
                          \frac{\ell}{G_F\epsilon}\right\}\right).
    \label{eq:cv-K-explicit}
\end{equation}
The logarithmic factors include $\log T$ from $\delta_{\rm in}$.

Condition on the history at the start of each inner run and apply
Lemma~\ref{lem:convex-inner}. Let $\mathcal E_{\rm in}$ be the event that
all $T$ inner guarantees hold with error at most $e_*$. A union bound gives
$\Pr(\mathcal E_{\rm in})\geq1-T\delta_{\rm in}=5/6$.

\begin{proof}[Proof of Theorem~\ref{thm:convex-smooth}]
On $\mathcal E_{\rm in}$, \eqref{eq:cv-smooth-transfer},
\eqref{eq:cv-schedule}, and $G_f/G_F\leq1$ give
\[
    \frac1T\sum_{t=0}^{T-1}\mathcal G_\ell(p_{t+1})^2
       \leq\left(\frac{12}{1000}
               +\frac{174}{4000}
               +\frac{39}{3600}\right)\epsilon^2
       <\frac{\epsilon^2}{6}.
\]
Conditionally on any such trajectory, uniform independent selection of
$J\in\{1,\ldots,T\}$ and Markov's inequality imply
$\Pr\{\mathcal G_\ell(p_J)\leq\epsilon\mid\text{trajectory}\}\geq5/6$.
The unconditional success probability is at least $25/36>2/3$.
The actual output $\hat x=p_J$ is always feasible and requires just one
projection onto $\C$. The schedule gives
$TK=\widetilde O(\epsilon^{-4})$ stochastic gradient calls for fixed
problem parameters.
\end{proof}

\begin{proof}[Proof of Theorem~\ref{thm:convex-nonsmooth}]
On the same event $\mathcal E_{\rm in}$,
\eqref{eq:cv-nonsmooth-transfer} and \eqref{eq:cv-schedule} give
\[
    \frac1T\sum_{t=0}^{T-1}\mathcal S_\ell(p_{t+1})^2
       \leq\left(\frac{48}{1000}
               +\frac{120}{4000}
               +\frac{156}{3600}\right)\epsilon^2
       <\frac{\epsilon^2}{6}.
\]
Uniform independent selection of $J$ and Markov's inequality again give
$\Pr\{\mathcal S_\ell(p_J)\leq\epsilon\}\geq25/36>2/3$.
The output is exactly feasible, and \eqref{eq:moreau-intuition} supplies
the nearby-stationarity interpretation. The same schedule and final
projection yield the stated stochastic subgradient and projection counts.
\end{proof}

The gap $\Delta_f$ need not be computed: any known upper bound may replace
it in the schedule, and $\Delta_f\leq2G_f$ on the unit ball.
No evaluation of the stationarity measure or of the proof-only points
$p_t,y_t$ is needed to run the algorithm.

\section{Proofs for Nonconvex Constraints}
\label{app:nonconvex}

Throughout this section Assumption~\ref{ass:nonconvex} holds.
Set
\[
    \rho_\lambda:=\ell+\lambda L_g,\qquad
    G_\lambda:=G_f+\lambda G_g,\qquad
    G_H:=G_\lambda+2\beta,\qquad m=\beta-\rho_\lambda>0.
\]
The constants $G_H,m$ in this section refer to the nonconvex configuration.

\subsection{Geometry of the Terminal Projection}

\begin{lemma}[A projection tube derived from $g$]
\label{lem:geometry}
Let $R=\sigma_{\rm bd}/L_g$. For every $p,y\in\C$ and
$n\in N_{\C}(p)$,
\begin{equation}
    \ip{n}{y-p}\leq\frac{\norm{n}}{2R}\norm{y-p}^2.
    \label{eq:normal-inequality}
\end{equation}
Consequently, $\Pset_{\C}(x)$ is unique whenever $\dist(x,\C)<R$.
In particular, $\C$ is $R$-proximally smooth.
\end{lemma}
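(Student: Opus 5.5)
The argument has three steps: describe the normal cone of $\C$, derive the inequality \eqref{eq:normal-inequality} from smoothness of $g$, and then obtain uniqueness of projections from that inequality.

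\textbf{Step 1: normal cone.} Fix $p\in\C$ and $n\in N_{\C}(p)$. If $g(p)<0$, continuity of $g$ makes $p$ an interior point of $\C$, so $n=0$ and the claim is trivial. If $g(p)=0$, then \eqref{eq:boundary-cq} gives $\norm{\nabla g(p)}\geq\sigma_{\rm bd}>0$. The constraint $g\le0$ therefore satisfies LICQ at $p$. By the standard description of the limiting normal cone to a $C^1$ sublevel set under a nondegenerate gradient, $N_{\C}(p)=\{\alpha\nabla g(p):\alpha\geq0\}$, so $n=\alpha\nabla g(p)$ with $\alpha=\norm{n}/\norm{\nabla g(p)}$.

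\textbf{Step 2: the inequality.} Take any $y\in\C$. Since $\C\subset\operatorname{int}\B$ and the segment $[p,y]$ lies in the convex neighborhood where $\nabla g$ is $L_g$-Lipschitz, the descent lemma gives
\[
0\geq g(y)\geq g(p)+\ip{\nabla g(p)}{y-p}-\frac{L_g}{2}\norm{y-p}^2 .
\]
Using $g(p)=0$, this becomes $\ip{\nabla g(p)}{y-p}\leq \frac{L_g}{2}\norm{y-p}^2$. Multiplying by $\alpha=\norm{n}/\norm{\nabla g(p)}$ and using $\norm{\nabla g(p)}\geq\sigma_{\rm bd}$,
\[
\ip{n}{y-p}\leq\frac{\norm{n}L_g}{2\norm{\nabla g(p)}}\norm{y-p}^2\leq\frac{\norm{n}}{2R}\norm{y-p}^2 ,
\]
which is \eqref{eq:normal-inequality}.

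\textbf{Step 3: uniqueness.} Let $x$ satisfy $d:=\dist(x,\C)<R$, and suppose $p_1,p_2\in\Pset_{\C}(x)$. Both exist because $\C$ is compact. Proximal normals are limiting normals, so $n_i:=x-p_i\in N_{\C}(p_i)$ with $\norm{n_i}=d$. Applying \eqref{eq:normal-inequality} at $p_1$ with $y=p_2$, and at $p_2$ with $y=p_1$, and adding the two inequalities gives
\[
\ip{x-p_1}{p_2-p_1}+\ip{x-p_2}{p_1-p_2}\leq\frac{d}{R}\norm{p_1-p_2}^2 .
\]
The left side equals $\norm{p_1-p_2}^2$, so $(1-d/R)\norm{p_1-p_2}^2\leq0$. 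Since $d<R$, this forces $p_1=p_2$.

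\textbf{$R$-proximal smoothness.} The inequality \eqref{eq:normal-inequality} is the standard hypomonotonicity / prox-regularity characterization. It implies that $\C$ is $R$-proximally smooth in the sense of Clarke--Stern--Wolenski: unique projection in the open $R$-tube, together with local Lipschitz continuity of the projection there. Citing this equivalence suffices; no separate geometric argument is needed.

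\textbf{Main obstacle.} The delicate point is Step 1, namely that every limiting normal at a boundary point is a nonnegative multiple of $\nabla g(p)$, including at points where $\C$ may be nonconvex. I would justify it by the standard constraint-qualification result for $\{g\le0\}$ with $\nabla g(p)\neq0$, for example Rockafellar--Wets, Theorem 6.14. The nondegenerate gradient rules out any other normals.
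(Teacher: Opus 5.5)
Your proposal is correct and follows the paper's proof essentially step for step: the normal cone at a boundary point is the ray $\{\alpha\nabla g(p):\alpha\ge0\}$; the $L_g$-smooth lower bound along $[p,y]$ is scaled by $\alpha\le\|n\|/\sigma_{\rm bd}$; and uniqueness follows by adding the two normal inequalities. The only differences are in what you cite. You justify the normal-cone formula via Rockafellar--Wets, Theorem 6.14, which covers the limiting cone directly, whereas the paper takes the polar of the halfspace tangent cone. You also invoke the Clarke--Stern--Wolenski equivalence for proximal smoothness, whereas the paper appeals to the unique-projection-tube characterization.
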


\begin{proof}
At an interior point of $\C$ the normal cone is $\{0\}$.
At a boundary point $p$, the nonzero gradient condition yields the smooth
inequality normal formula
\[
    N_{\C}(p)=\{\alpha\nabla g(p):\alpha\geq0\}.
\]
Indeed, the local tangent cone is the halfspace
$\{u:\ip{\nabla g(p)}{u}\leq0\}$, whose polar is this normal ray.
Since $g(p)=0$ and $g(y)\leq0$, the smooth lower bound along the segment
in $\B$ gives
\[
    g(y)\geq\ip{\nabla g(p)}{y-p}
                   -\frac{L_g}{2}\norm{y-p}^2,
    \qquad
    \ip{\nabla g(p)}{y-p}\leq\frac{L_g}{2}\norm{y-p}^2.
\]
Multiply by $\alpha$ and use
$\alpha\leq\norm{n}/\sigma_{\rm bd}$ to obtain
\eqref{eq:normal-inequality}.

To see uniqueness directly, suppose $p_1,p_2$ are projections of $x$,
with common distance $d<R$. Then $n_i=x-p_i\in N_{\C}(p_i)$.
Applying \eqref{eq:normal-inequality} twice and adding yields
\[
    \norm{p_1-p_2}^2
       \leq\frac dR\norm{p_1-p_2}^2.
\]
Thus $p_1=p_2$. Projections exist by compactness, so this proves the
unique-projection tube characterization of proximal smoothness.
\end{proof}

\subsection{Penalty Residual and Original-Problem Stationarity}

The function $[g]_+$ is $L_g$-weakly convex because
\[
    [g(x)]_++\frac{L_g}{2}\norm{x}^2
       =\max\left\{g(x)+\frac{L_g}{2}\norm{x}^2,
                         \frac{L_g}{2}\norm{x}^2\right\}
\]
is convex. Thus $\Psi_\lambda$ is $\rho_\lambda$-weakly convex, and
$Q_{\lambda,\beta}(x)$ is well-defined and unique. Since $f$ is Lipschitz
on a neighborhood of $\B$, all of its limiting subgradients on $\B$ have
norm at most $G_f$.

\begin{lemma}[A small penalty residual has a feasible KKT witness]
\label{lem:nc-transfer}
For $x\in\B$, write $y=Q_{\lambda,\beta}(x)$ and
$r=\mathcal R_{\lambda,\beta}(x)$. If $r<h_{\rm feas}$, then $y\in\C$ and
there are $v\in\partial f(y)$ and $\mu\in[0,\lambda]$ with
\begin{equation}
    \beta(x-y)=v+\mu\nabla g(y),\qquad \mu g(y)=0.
    \label{eq:nc-witness}
\end{equation}
Every $p\in\Pset_{\C}(x)$ then satisfies
\begin{equation}
    \dist(x,\C)\leq\frac r\beta,\qquad
    \norm{p-y}\leq\frac{2r}{\beta},\qquad
    \dist(0,\partial f(y)+N_{\C}(y))\leq r.
    \label{eq:nc-transfer}
\end{equation}
If additionally $r<\beta R$, the projection is unique.
\end{lemma}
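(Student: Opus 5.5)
The proof starts from the first-order optimality condition of the strongly convex proximal problem defining $y=Q_{\lambda,\beta}(x)$ over $\B$. Since $\Psi_\lambda+\frac\beta2\norm{\cdot-x}^2$ is $m$-strongly convex and $f$ and $\lambda[g]_+$ are locally Lipschitz, the limiting sum rule applies. Together with the chain rule for $\lambda[g]_+$ (its subdifferential at $y$ is $\lambda\theta\nabla g(y)$ with $\theta=1$ if $g(y)>0$, $\theta\in[0,1]$ if $g(y)=0$, $\theta=0$ if $g(y)<0$), this gives
\[
    \beta(x-y)=v+\mu\nabla g(y)+n,\qquad v\in\partial f(y),\ \mu=\lambda\theta\in[0,\lambda],\ n\in N_{\B}(y),
\]
with $\mu g(y)=0$ automatically when $g(y)\le0$.

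\emph{Feasibility.} Suppose $g(y)>0$, so $\mu=\lambda$. Then $\lambda\nabla g(y)+n\in\lambda(\nabla g(y)+N_{\B}(y))$, since $N_{\B}(y)$ is a cone. By \eqref{eq:infeasible-slope}, $\norm{\lambda\nabla g(y)+n}\ge\lambda\sigma_{\rm out}$. With $\norm{v}\le G_f$ this gives
\[
    r=\norm{\beta(x-y)}\ge\lambda\sigma_{\rm out}-G_f=h_{\rm feas},
\]
which contradicts $r<h_{\rm feas}$. Hence $y\in\C\subset\operatorname{int}\B$. Then $N_{\B}(y)=\{0\}$, so $n=0$, and \eqref{eq:nc-witness} follows.

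\emph{Stationarity.} If $g(y)<0$ then $\mu=0$. If $g(y)=0$ then $\nabla g(y)\neq0$ by \eqref{eq:boundary-cq}, and Lemma~\ref{lem:geometry} (its proof) gives $N_{\C}(y)=\{\alpha\nabla g(y):\alpha\ge0\}\ni\mu\nabla g(y)$. In both cases $\beta(x-y)\in\partial f(y)+N_{\C}(y)$, so $\dist(0,\partial f(y)+N_{\C}(y))\le r$.

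\emph{Distances.} Since $y\in\C$, we have $\dist(x,\C)\le\norm{x-y}=r/\beta$. For any $p\in\Pset_{\C}(x)$, the triangle inequality gives $\norm{p-y}\le\norm{p-x}+\norm{x-y}\le2\norm{x-y}=2r/\beta$. If further $r<\beta R$, then $\dist(x,\C)\le r/\beta<R$, and the uniqueness part of Lemma~\ref{lem:geometry} applies.

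The main obstacle is justifying the subdifferential calculus at $y$: the sum rule for the limiting subdifferential, the chain rule for $[g]_+$ at points with $g(y)=0$, and $N_{\B}$ as the normal cone of the ball constraint. I would handle this as in Lemma~\ref{lem:cv-prox-lipschitz}: add $\frac{\rho_\lambda}{2}\norm{\cdot}^2$ to make each summand convex and apply the convex sum rule. For the chain rule, I would write $[g]_+ +\frac{L_g}{2}\norm{\cdot}^2$ as a max of two $C^1$ convex functions, whose subdifferential is the convex hull of the active gradients.
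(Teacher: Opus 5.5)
Your proposal is correct and follows the paper's proof essentially step for step. The matching steps are: the quadratically shifted convex optimality condition with $\mu=\lambda\theta$; the contradiction with \eqref{eq:infeasible-slope} via the cone property of $N_{\B}(y)$ when $g(y)>0$; removal of $n$ because $\C\subset\operatorname{int}\B$; the normal-ray formula from Lemma~\ref{lem:geometry}; and the triangle-inequality and projection-tube arguments for the distances and uniqueness. Your max-of-two-convex-$C^1$-functions derivation of the hinge subdifferential is the same representation the paper uses, just before the lemma, to show $[g]_+$ is weakly convex.
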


\begin{proof}
Convex subdifferential calculus after a quadratic shift, or the equivalent
weakly convex calculus, gives
\begin{equation}
    \beta(x-y)=v+\lambda\theta\nabla g(y)+n,
    \qquad v\in\partial f(y),\quad n\in N_{\B}(y),
    \label{eq:nc-prox-opt}
\end{equation}
where $\theta=1$ for $g(y)>0$, $\theta=0$ for $g(y)<0$, and
$\theta\in[0,1]$ for $g(y)=0$.
If $g(y)>0$, then $N_{\B}(y)$ is a cone and
\[
    r=\norm{v+\lambda\nabla g(y)+n}
       \geq\lambda\dist(0,\nabla g(y)+N_{\B}(y))-G_f
       \geq h_{\rm feas},
\]
a contradiction. Thus $y\in\C\subset\operatorname{int}\B$ and $n=0$.
Set $\mu=\lambda\theta$. This proves \eqref{eq:nc-witness}, including
nonnegativity and exact complementarity. The boundary normal formula
from Lemma~\ref{lem:geometry} shows that
$\mu\nabla g(y)\in N_{\C}(y)$, proving the stationarity claim.

Finally $\dist(x,\C)\leq\norm{x-y}=r/\beta$. For any nearest point $p$,
the triangle inequality gives
$\norm{p-y}\leq\norm{p-x}+\norm{x-y}\leq2r/\beta$.
Lemma~\ref{lem:geometry} gives uniqueness when $r<\beta R$.
\end{proof}

\subsection{Inexact Proximal Descent}

\begin{lemma}[Average penalty residual]
\label{lem:nc-descent}
Suppose every inner run returns $x_{t+1}\in\B$ such that
\begin{equation}
    H_t(x_{t+1})-H_t(y_t)\leq e,\qquad
    H_t(z):=\Psi_\lambda(z)+\frac\beta2\norm{z-x_t}^2,\qquad
    y_t=Q_{\lambda,\beta}(x_t).
    \label{eq:nc-inner-gap}
\end{equation}
Then we have  
\begin{equation}
    \frac1T\sum_{t=0}^{T-1}\mathcal R_{\lambda,\beta}(x_t)^2
    \leq \frac{4\beta\Delta_\lambda}{T}
         +4\beta\left(1+\frac\beta m\right)e. 
\end{equation}
\end{lemma}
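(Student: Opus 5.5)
Let $y_t=Q_{\lambda,\beta}(x_t)$ and $r_t=\mathcal R_{\lambda,\beta}(x_t)=\beta\norm{x_t-y_t}$. The plan is a standard inexact proximal-point descent argument on the Moreau-type envelope
\[
    M(x):=\min_{z\in\B}\Bigl\{\Psi_\lambda(z)+\frac\beta2\norm{z-x}^2\Bigr\}=H_t(y_t)\ \text{ at } x=x_t,
\]
using $M$ as the potential. Two inequalities are needed per step, and then we telescope.

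\emph{Step 1 (upper bound of $M(x_{t+1})$ by $\Psi_\lambda(x_{t+1})$).} Taking $z=x_{t+1}\in\B$ in the definition gives $M(x_{t+1})\leq\Psi_\lambda(x_{t+1})$. This trivial step avoids any envelope smoothness.

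\emph{Step 2 (one-step decrease).} By strong convexity of $H_t$ with modulus $m$ on $\B$ and optimality of $y_t$,
\[
    H_t(x_t)-H_t(y_t)\geq\frac m2\norm{x_t-y_t}^2=\frac{m}{2\beta^2}r_t^2.
\]
However, $x_{t+1}$ is only near $y_t$. The cleaner route is $H_t(x_{t+1})\leq H_t(y_t)+e$, i.e.
\[
    \Psi_\lambda(x_{t+1})+\frac\beta2\norm{x_{t+1}-x_t}^2\leq M(x_t)+e .
\]
Combined with Step 1, $M(x_{t+1})+\frac\beta2\norm{x_{t+1}-x_t}^2\leq M(x_t)+e$. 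We then relate $\norm{x_{t+1}-x_t}$ to $r_t$. Strong convexity gives $\frac m2\norm{x_{t+1}-y_t}^2\leq e$. Hence
\[
    \norm{x_t-y_t}^2\leq2\norm{x_{t+1}-x_t}^2+2\norm{x_{t+1}-y_t}^2\leq 2\norm{x_{t+1}-x_t}^2+\frac{4e}{m},
\]
so $r_t^2/\beta^2\leq 2s_t^2+4e/m$. Thus $\frac{r_t^2}{4\beta}\leq\frac\beta2 s_t^2+\frac{\beta e}{m}$, and therefore
\[
    \frac{r_t^2}{4\beta}\leq M(x_t)-M(x_{t+1})+e+\frac{\beta e}{m}.
\]

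\emph{Step 3 (telescoping).} Summing over $t=0,\dots,T-1$ gives $\frac1{4\beta}\sum r_t^2\leq M(x_0)-M(x_T)+T(1+\beta/m)e$. We bound $M(x_0)\leq\Psi_\lambda(x_0)$ (take $z=x_0$) and $M(x_T)\geq\min_\B\Psi_\lambda$, so $M(x_0)-M(x_T)\leq\Delta_\lambda$. Multiplying by $4\beta/T$ yields exactly the claimed bound $4\beta\Delta_\lambda/T+4\beta(1+\beta/m)e$.

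The only delicate point is Step 2: converting the function-gap guarantee into control of $\norm{x_t-y_t}$ with the right constants. The factor $1+\beta/m$ must come from splitting with $(a+b)^2\leq2a^2+2b^2$ and quadratic growth of $H_t$ at its minimizer over the convex set $\B$. I expect the constants to match as shown above.
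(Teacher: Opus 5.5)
Your proof is correct and is essentially the paper's argument: it uses the same quadratic-growth bound $\norm{x_{t+1}-y_t}^2\le 2e/m$, the same one-step inequality $\Psi_\lambda(x_{t+1})+\frac\beta2\norm{x_{t+1}-x_t}^2\le H_t(y_t)+e$, and the same split $\mathcal R_{\lambda,\beta}(x_t)^2\le 2\beta^2\norm{x_t-x_{t+1}}^2+2\beta^2\norm{x_{t+1}-y_t}^2$. The only difference is cosmetic: you telescope the envelope value $M(x_t)=H_t(y_t)$, whereas the paper telescopes $\Psi_\lambda(x_t)$ via $H_t(y_t)\le H_t(x_t)=\Psi_\lambda(x_t)$, and both give the same constants.
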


\begin{proof}
Strong convexity and \eqref{eq:nc-inner-gap} imply
$\norm{x_{t+1}-y_t}^2\leq2e/m$. Since
$H_t(y_t)\leq H_t(x_t)=\Psi_\lambda(x_t)$, we also have
\[
    \Psi_\lambda(x_{t+1})+\frac\beta2\norm{x_{t+1}-x_t}^2
       \leq\Psi_\lambda(x_t)+e.
\]
Sum this inequality and use the definition of $\Delta_\lambda$ to obtain
\[
    \frac1T\sum_{t=0}^{T-1}\norm{x_{t+1}-x_t}^2
        \leq\frac{2\Delta_\lambda}{\beta T}+\frac{2e}{\beta}.
\]
Finally,
\[
    \mathcal R_{\lambda,\beta}(x_t)^2
       \leq2\beta^2\norm{x_t-x_{t+1}}^2
              +2\beta^2\norm{x_{t+1}-y_t}^2.
\]
Averaging concludes the proof.
\end{proof}

\subsection{Proof of Theorem~\ref{thm:nonconvex}}
\label{app:nc-schedule}

Choose
\begin{equation}
\begin{split}
    T&:=\max\left\{2,\left\lceil
                        \frac{48\beta\Delta_\lambda}{\epsilon^2}
                           \right\rceil\right\},\qquad
    \delta_{\rm in}:=\frac1{6T},\\
    e_*&:=\frac{\epsilon^2}{48\beta(1+\beta/m)}.
\end{split}
\label{eq:nc-schedule}
\end{equation}
Take $K\geq2$ such that $U_K(m,G_H,\delta_{\rm in})\leq e_*$.
Doubling $K$ from $2$ until this scalar inequality holds is sufficient and
gives
\begin{equation}
    K=\widetilde O\left(
       1+\left(m+\frac{G_H^2}{m}\right)
             \frac{\beta(1+\beta/m)}{\epsilon^2}\right).
    \label{eq:nc-K-explicit}
\end{equation}

The actual inner update is an unbiased stochastic subgradient step for
$H_t$ on $\B$, with norm bounded by
$G_f+\lambda G_g+2\beta=G_H$. At $g=0$, its chosen hinge coefficient
zero is admissible. Lemma~\ref{lem:inner}, conditional on the past of each
run, and a union bound give \eqref{eq:nc-inner-gap} for all $t$ with
probability at least $5/6$. On this event Lemma~\ref{lem:nc-descent} and
\eqref{eq:nc-schedule} yield
\[
    \frac1T\sum_{t=0}^{T-1}\mathcal R_{\lambda,\beta}(x_t)^2
        \leq\frac{\epsilon^2}{12}+\frac{\epsilon^2}{12}
        =\frac{\epsilon^2}{6}.
\]
The independent uniform index $J\in\{0,\ldots,T-1\}$ therefore satisfies
$\mathcal R_{\lambda,\beta}(x_J)\leq\epsilon$ with conditional probability
at least $5/6$. As before, the unconditional success probability is at
least $25/36>2/3$.
The strict target restrictions $\epsilon<h_{\rm feas}$ and
$\epsilon<\beta R$ allow us to apply Lemma~\ref{lem:nc-transfer},
which gives \eqref{eq:nc-guarantee} and unique terminal projection.
Regardless of this event, $\C$ is compact and nonempty, so any exact
nearest point returned by the oracle is feasible. This proves
Theorem~\ref{thm:nonconvex}.

A known upper bound on $\Delta_\lambda$ can replace it in the schedule;
in particular $\Delta_\lambda\leq2G_\lambda$. No proximal residual or
distance to $\C$ is evaluated during the iterations.
The guarantee is a nearby-stationarity certificate, not a claim that the
unknown witness $y$ or its multiplier is explicitly returned.

\subsection{Why Boundary Regularity Alone Does Not Suffice}
\label{app:obstruction}

Consider the one-dimensional problem with $f(x)=0$, $\B=[-1,1]$, and
\[
    g(x)=\left(x^2-\frac14\right)^2-\frac1{64}.
\]
Its feasible set is
$\C=\{x:1/8\leq x^2\leq3/8\}\subset(-1,1)$, with two components.
The gradient is Lipschitz on a neighborhood of $\B$, and it is nonzero
at all four boundary points. Nevertheless,
$g(0)=3/64>0$ and $g'(0)=0$.
Starting Algorithm~\ref{alg:one-projection} at $x_0=0$ with the exact
zero objective oracle leaves every inner and outer iterate at zero.
Indeed, if $\beta>\ell+\lambda L_g$, the penalized proximal objective is
strongly convex and has derivative zero at zero, so its exact minimizer
is also zero. Consequently the penalty residual is zero there, but its
proximal witness is infeasible.

This example does not show that every first-order method fails, nor does
it say that the final projected point in this particular constant-objective
problem is nonstationary. It shows precisely why boundary regularity cannot
justify the implication from a small penalty residual to a feasible proximal
witness. The global condition \eqref{eq:infeasible-slope} excludes this
obstruction. It is a substantive restriction, especially for disconnected
or complicated nonconvex feasible regions.

\paragraph{Why not require every proximal subproblem to be feasible?}
For nonconvex $g$, requiring every penalized proximal subproblem to have a
feasible minimizer can couple the penalty coefficient to $\beta$, while
strong convexity simultaneously requires $\beta>\ell+\lambda L_g$.
Those demands need not be compatible. Our proof imposes no such
per-subproblem exactness requirement. It first fixes
$\lambda>G_f/\sigma_{\rm out}$, then fixes $\beta>\ell+\lambda L_g$,
and only forces feasibility of a proximal witness once its residual is
below $h_{\rm feas}$. No compatibility condition between an inner
exact-penalty margin and the proximal curvature is needed.

\section{Additional Experimental Details}
\label{app:experimental-details}

\subsection{Transfer Learning}
\label{app:transfer-details}
\paragraph{Data and objective.}
Table~\ref{tab:transfer-datasets} lists the source and target training
sample sizes and model architectures. For a labeled sample $D$ and a
real-valued score $s_\theta$, the empirical logistic risk is
\[
    R_D(\theta)=\frac{1}{|D|}\sum_{(x,y)\in D}
       \log\bigl(1+\exp(-(2y-1)s_\theta(x))\bigr),
       \qquad y\in\{0,1\}.
\]
We minimize $R_T$ subject to $R_S\leq\tau_S$, with
$\tau_S=1/\sqrt{n_S}$. This is a cap on the raw empirical source risk,
not an excess-risk constraint of the form
$R_S\leq\inf_\theta R_S(\theta)+\tau_S$; no assumption that the minimum
source logistic risk equals zero is needed to describe these experiments.

\begin{table}[!htbp]
\centering
\caption{Transfer-learning data and models. Counts are class 0/class 1
in the target and source training samples.}
\label{tab:transfer-datasets}
\small
\setlength{\tabcolsep}{5pt}
\renewcommand{\arraystretch}{1.15}
\begin{tabular}{@{}lcccl@{}}
\toprule
Dataset & Target & Source & Variables & Model \\
\midrule
Gaussian & 40/40 & 1000/1000 & 10 & Linear with intercept \\
ClimSim rainfall & 100/100 & 950/950 & 124 & $[8]$ Softplus MLP \\
\bottomrule
\end{tabular}
\end{table}

\paragraph{Budgets and reporting.}
Both methods are evaluated over ten paired test seeds. The numbers of
constrained target updates differ across methods and are reported in
Table~\ref{tab:transfer-optimization-details}; they exclude the
source-reference fitting stage. The recorded runtime includes both
source-reference fitting and constrained target optimization. Speedups
are ratios of mean runtimes, not means of per-seed ratios. Figure~\ref{fig:transfer-training-runtime}
in the main text summarizes these runtime comparisons. The ClimSim final source risk rounds to $0.02295$, while the displayed
source-risk limit rounds to $0.02294$. The unrounded residual is within
the configured feasibility tolerance of $10^{-5}$; all locked runs were
therefore recorded as numerically feasible.

\begin{table}[!htbp]
\centering
\caption{Transfer-learning update budgets and final empirical source risks.
Source risks are means $\pm$ sample standard deviations over ten runs;
zeros in the reported standard deviations reflect rounding.}
\label{tab:transfer-optimization-details}
\small
\setlength{\tabcolsep}{5pt}
\renewcommand{\arraystretch}{1.15}
\begin{tabular}{@{}llrcc@{}}
\toprule
Dataset & Method & Updates & Source-risk cap & Final source risk \\
\midrule
\multirow{2}{*}{Gaussian} & One Projection & 3,000 & 0.02236 & $0.022\pm0.001$ \\
 & Projected SGD & 500 & 0.02236 & $0.022\pm0.000$ \\
\addlinespace[3pt]
\multirow{2}{*}{ClimSim} & One Projection & 1,000 & 0.02294 & $0.02295\pm0.00000$ \\
 & Projected SGD & 100 & 0.02294 & $0.02295\pm0.00000$ \\
\bottomrule
\end{tabular}
\end{table}

\paragraph{Synthetic Gaussian data.}
The synthetic experiment used 10-dimensional Gaussian observations with
identity covariance. The target distributions were
\[
    X_{T,0}\sim N(\boldsymbol{0},I),
    \qquad
    X_{T,1}\sim N(\boldsymbol{1},I),
\]
and the source distributions were
\[
    X_{S,0}\sim N(-2\boldsymbol{1},I),
    \qquad
    X_{S,1}\sim N(\boldsymbol{1},I).
\]
Thus, class 1 had the same distribution in both domains, whereas source
class 0 was shifted relative to target class 0. Each seed used 40
target-training and 1,000 source-training observations per class.
The target validation and test sets each contained 500 observations per
class. We used a linear score with an intercept.

\paragraph{ClimSim rainfall data.}
We constructed domains by grouping consecutive location identifiers into blocks
of four. The target domain comprised locations 104--107 (derived group 26),
and the source domain comprised locations 108--111 (derived group 27).
Heavy rain was defined as a rain rate above the 95th percentile of nonzero
training-period rainfall. This threshold was fixed before constructing
the validation and test labels. The data were divided chronologically,
and preprocessing was fitted using the union of the source and target
training samples only.

The experiment used deliberately balanced samples rather than the natural
rainfall prevalence. Each seed contained 100 target-training and 950
source-training observations per class, 200 target-validation observations
per class, and 400 target-test observations per class. The model was a
one-hidden-layer $[8]$ Softplus MLP receiving the 124 climate variables.

\paragraph{Optimization settings.}
For Gaussian data, the shared source-reference model was fitted for 1,000
updates with learning rate $0.05$. One Projection used learning rate
$0.5$, 3,000 target-gradient updates, and minibatches of size 32;
Projected SGD used learning rate $1.0$, 500 updates, and the same batch
size. The linear source-risk projection was solved as a convex numerical
projection.

For ClimSim, the shared source-reference model was fitted for 200 updates
with learning rate $0.003$. One Projection used learning rate $0.05$,
1,000 target-gradient updates, and minibatches of size 40; Projected SGD
used learning rate $0.3$, 100 updates, and the same batch size. Neural
source-risk projections used the augmented-Lagrangian solver with
feasibility tolerance $10^{-5}$. All reported timing experiments ran on
the CPU.

\FloatBarrier
\paragraph{Training curves.}
Figures~\ref{fig:gaussian-transfer-training-histories}
and~\ref{fig:climsim-transfer-training-histories} show target validation
risk, target training risk, and source risk against cumulative updates.
The two methods are shown only over their respective update budgets.

Points show means over ten runs and shaded regions show one sample
standard deviation. One Projection points are outer-loop outputs plotted
at their cumulative gradient-step counts, while Projected SGD points are
stored evaluation checkpoints. The final One Projection point includes
the terminal feasibility projection. The dotted horizontal line denotes
the maximum permitted empirical source risk.

\input{ICLR/figures/transfer_synthetic_gaussian}
\ifdefined\arxivexpandedexperiments
\else
\input{ICLR/figures/transfer_climsim}
\fi
\FloatBarrier

\subsection{Classification with Abstention}
\label{app:abstention-details}
\paragraph{Empirical problem and metrics.}
Let $\{(x_i,y_i)\}_{i=1}^{n}$ be the training sample,
$n_j=|\{i:y_i=j\}|$, and $\theta=(\theta_0,\theta_1)$.
With $\phi(u)=[1+u]_+$, we use
\begin{align*}
    \widehat R_{\mathrm{amb},\phi}(\theta)
       &=\frac1n\sum_{i=1}^{n}
          \phi(s_0(x_i;\theta_0))\phi(s_1(x_i;\theta_1)),\\
    \widehat R_{\phi,j}(\theta_j)
       &=\frac1{n_j}\sum_{i:y_i=j}\phi(-s_j(x_i;\theta_j)),
          \qquad j\in\{0,1\}.
\end{align*}
The empirical feasible set requires
$\widehat R_{\phi,j}(\theta_j)\leq\beta_j+\epsilon_j$ for both
classes, retaining the calibration slacks described below.
On the test sample, class-$j$ error is the fraction of class-$j$
observations for which $s_j<0$, and ambiguity is the fraction for which
both scores are nonnegative. These indicator metrics differ from the
surrogate quantities optimized during training.

\paragraph{Calibration and evaluation.}
All abstention experiments use 500 parameter updates. Model architecture,
feature representation, learning rate, and surrogate thresholds
$(\beta_0,\beta_1)$ were selected using validation data. The thresholds
were calibrated separately for each optimizer to give comparable error
and ambiguity rates, then held fixed for evaluation over ten test seeds.
Test data were not used for model selection or hyperparameter tuning.
Table~\ref{tab:appendix-d-datasets} gives the nominal error targets
$\alpha_j$, and Table~\ref{tab:abstention-calibration} reports the chosen
surrogate thresholds and accuracy among decided observations.
Here $\beta_j$ denotes a surrogate threshold, not the proximal
regularization parameter in Algorithm~\ref{alg:one-projection}.

\begin{table}[!htbp]
\centering
\caption{Abstention datasets and nominal class-conditional error targets
$\alpha_0=\alpha_1=\alpha$. Counts refer to the full experimental samples;
architectures and feature processing are described in the text.}
\label{tab:appendix-d-datasets}
\small
\setlength{\tabcolsep}{6pt}
\renewcommand{\arraystretch}{1.15}
\begin{tabular}{@{}lrrrc@{}}
\toprule
Dataset & Observations & Class 0/class 1 & Raw variables & $\alpha$ \\
\midrule
Gaussian & 2,000 & 1,000/1,000 & 8 & 0.15 \\
ClimSim rainfall & 11,373 & 10,825/548 & 124 & 0.05 \\
NASA rainfall & 49,302 & 47,073/2,229 & 6 & 0.10 \\
Credit default & 23,864 & 21,236/2,628 & 11 & 0.20 \\
\bottomrule
\end{tabular}
\end{table}

The surrogate thresholds $\beta_j$ were calibrated on validation data
to move the class-conditional errors toward the targets $\alpha_j$.
Training enforced
\[
    \widehat R_{\phi,j}(\theta_j)
    \leq \beta_j+\epsilon_j,
    \qquad
    \epsilon_j=\frac{C}{\sqrt{n_j}}.
\]
The shared slack constant $C$ was selected using validation data and
fixed at $C=0.5$ for the reported experiments. Satisfying this surrogate
constraint does not guarantee that the held-out class-conditional error
is at most $\alpha_j$. In particular,
both methods have about $9\%$ class-0 test error on ClimSim despite the
nominal $5\%$ target; the class-1 errors on credit default are also
slightly above the nominal $20\%$ target. Moreover, the different
calibrated thresholds used by the two optimizers on ClimSim and NASA
mean that they are compared at similar predictive tradeoffs rather than
on identical constrained problems.

\begin{table}[!htbp]
\centering
\caption{Abstention calibration and supplementary test accuracy.
Thresholds are selected on validation data; accuracy is reported as
mean $\pm$ sample standard deviation over ten runs.}
\label{tab:abstention-calibration}
\small
\setlength{\tabcolsep}{8pt}
\renewcommand{\arraystretch}{1.12}
\begin{tabular}{@{}llcc@{}}
\toprule
Dataset & Method & $(\beta_0,\beta_1)$ & Decided accuracy (\%) \\
\midrule
\multirow{2}{*}{Gaussian} & One Projection & $(0.300,0.300)$ & $84.95\pm2.14$ \\
 & Projected SGD & $(0.300,0.300)$ & $84.77\pm1.96$ \\
\addlinespace[3pt]
\multirow{2}{*}{ClimSim} & One Projection & $(0.180,0.290)$ & $91.69\pm0.60$ \\
 & Projected SGD & $(0.140,0.225)$ & $91.96\pm0.98$ \\
\addlinespace[3pt]
\multirow{2}{*}{NASA rainfall} & One Projection & $(0.260,0.210)$ & $89.35\pm0.39$ \\
 & Projected SGD & $(0.260,0.280)$ & $89.46\pm0.44$ \\
\addlinespace[3pt]
\multirow{2}{*}{Credit default} & One Projection & $(0.425,0.400)$ & $79.77\pm1.73$ \\
 & Projected SGD & $(0.425,0.400)$ & $78.78\pm1.58$ \\
\bottomrule
\end{tabular}
\end{table}

\paragraph{Numerical implementation.}
For the One Projection updates, the two residuals are combined as
\[
    g_{\max}(\theta)=\max_{j\in\{0,1\}}
       \{\widehat R_{\phi,j}(\theta_j)-\beta_j-\epsilon_j\}.
\]
The projection routines enforce the two inequalities individually.
For linear scores, Euclidean projection onto the empirical feasible set
is a convex problem and is solved to numerical tolerance. For neural
networks, an augmented-Lagrangian solver approximately minimizes
$\frac12\|\theta-\bar\theta\|^2$ subject to the empirical constraints.
It seeks a feasible point near $\bar\theta$ but does not certify a globally
nearest feasible point. One Projection uses independent minibatches to estimate the ambiguity
objective and each class-conditional constraint gradient. Projected SGD
uses an objective minibatch followed by a full-training-set feasibility
projection after every parameter update.
Thus these experiments evaluate a practical implementation: the hinge
and maximum constraints can be nonsmooth, constraint estimates are
stochastic, and the neural-network projection is approximate. The exact
constraint-oracle and projection assumptions of the nonconvex-constraint
theorem are not verified by these experiments.

\paragraph{Synthetic Gaussian.}
For each seed, we generated 2,000 observations from two balanced
eight-variable Gaussian classes, with class separation 2.0,
identity-scaled covariance, and 2\% label noise. The split contains
1,200 training, 400 validation, and 400 test observations. We use two
independent linear scores with intercepts.

\paragraph{ClimSim rainfall.}
The 11,373 observations were split chronologically into 6,844 training,
2,257 validation, and 2,272 test observations. Class 1 denotes a rain
rate above the training-set 95th percentile. Each score is an independent
$[32,16]$ Softplus MLP with 248 inputs: 124 climate variables and their
squares.

\paragraph{NASA rainfall.}
The 49,302 observations were divided using a seeded, stratified
60/20/20 random split: 29,581 training, 9,860 validation, and 9,861 test
observations. The available data artifact contains no dates, so the
split is not chronological. Each score is an independent $[8]$ Softplus
MLP with 27 degree-two features derived from six NASA variables.

\paragraph{Credit default.}
The 23,864 labeled observations were divided using a seeded, stratified
60/20/20 random split: 14,318 training, 4,772 validation, and 4,774 test
observations. Default is class 1. Each score is an independent $[16,8]$
ReLU MLP with 11 inputs after a Yeo--Johnson transformation fitted on
the training data.

\paragraph{Optimization and timing details.}
All experiments used minibatches of size 64 and 500 parameter updates.
One Projection used \(T=20\) outer iterations and \(K=25\) inner
iterations, with proximal coefficient \(\ell=0.1\), penalty strength
\(5.0\), and the last outer iterate. Its joint parameter-ball radius was
2 for the linear Gaussian model and 10 for the MLP models. The selected
One Projection/Projected SGD learning rates were \(0.05/0.05\) for
Synthetic Gaussian, \(0.05/0.05\) for ClimSim, \(0.05/0.03\) for NASA,
and \(0.10/0.03\) for credit default.

The linear Gaussian experiment used the exact empirical-risk projector
with feasibility tolerance \(10^{-5}\), at most 30 bisection steps, and
at most 500 dual steps. The MLP experiments used the approximate
augmented-Lagrangian projector with learning rate \(0.01\), 150 inner
iterations per multiplier round, six multiplier rounds, initial penalty
1, penalty growth factor 5, maximum penalty \(10^6\), 25 feasible-refinement
iterations, feasibility tolerance \(10^{-5}\), and boundary-refinement
tolerance \(10^{-4}\).

Runs were deterministic and CPU-only, using an Intel Core i7-7820X,
Python 3.11.9, and PyTorch 2.13.0. Reported training runtime begins with
the optimizer's parameter updates and ends after its final training
projection. It includes all bounding-set and feasible-set projections,
but excludes data loading and preprocessing, feasible initialization,
post-training evaluation, figure generation, and artifact serialization.

\FloatBarrier
\paragraph{Additional training curves.}
The ClimSim curves appear in Figure~\ref{fig:climsim-rainfall-training-histories}
in the main text. Figures~\ref{fig:synthetic-gaussian-training-histories},
\ref{fig:nasa-rainfall-training-histories}, and
\ref{fig:credit-default-training-histories} below give the corresponding
results for Gaussian, NASA rainfall, and credit default data.
All four figures show the product surrogate objective and class-conditional
surrogate risks. Points and colored bands report the mean and one sample
standard deviation across ten runs. One Projection checkpoints are
outer-loop outputs, with the last point evaluated after the terminal
projection; Projected SGD is recorded at the corresponding cumulative
update counts. Dotted lines show the validation-calibrated thresholds
$\beta_j$, separately for each optimizer where these differ. Gray bands
show $\beta_j\pm\epsilon_j$ with $\epsilon_j=0.5/\sqrt{n_j}$; they are
reference ranges, not confidence intervals. The enforced empirical
constraint is $\widehat R_{\phi,j}\leq\beta_j+\epsilon_j$.
One Projection permits intermediate constraint violation and applies
the feasible-set projection only at the end.

\input{ICLR/figures/abstention_synthetic_gaussian}
\input{ICLR/figures/abstention_nasa}
\input{ICLR/figures/abstention_credit_default}
\FloatBarrier

\end{document}